\documentclass[a4paper,12pt]{article}
\usepackage[utf8]{inputenc}
\usepackage{enumerate}
\usepackage{amssymb, bm}
\usepackage{amsthm}
\usepackage{amsmath}
\usepackage{tikz-cd}
\usepackage{tikz}
\usepackage{ulem}
\usepackage{bbold}
 \usepackage[all]{xy}
 
\newtheorem{mythm}{Theorem}[section]
\newtheorem{mylem}[mythm]{Lemma}
\newtheorem{myprop}[mythm]{Proposition}

\newtheorem{myex}[mythm]{Example}
\newtheorem{mydef}[mythm]{Definition}
\newtheorem{myrem}[mythm]{Remark}


\title{On neighborhoods of embedded toroidal and Hopf manifolds and their foliations}
\author{Laurent Stolovitch and Xiaojun Wu}

\newcommand{\diag}{\operatorname{diag}}

	\newcommand{\Om}{\Omega}
		\newcommand{\ov}{\overline}

\def\Z{\mathbb{Z}}
\def\Q{\mathbb{Q}}  \def\C{\mathbb{C}}
 \def\R{\mathbb{R}}
 \def\N{\mathbb{N}}

  \def\d{\partial}
 \def\dbar{{\overline{\partial}}}

\def\cA{\mathcal{A}}
  \def\cO{\mathcal{O}}

\def\cU{\mathcal{U}}

\def \Im{\mathrm{Im}}
\def \Re{\mathrm{Re}}

\bibliographystyle{plain}

\newcommand{\dindice}[2]{{\stackrel{\scriptstyle #1}{\scriptstyle #2}}}

\newcommand{\cL}{\mathcal}

\newcommand{\var}{\varphi}

\begin{document}
\maketitle
\begin{abstract}
In this article, we give completely new examples of embedded complex manifolds the germ of neighborhood of which is holomorphically equivalent to a germ of neighborhood of the zero section in its normal bundle. The first set of  examples is composed of connected abelian complex Lie groups, embedded in some complex manifold $M$. These are non compact manifolds in general. We also give some conditions ensuring the existence a holomorphic foliation having the embedded manifold as leaf. 
The second set of examples are $n$-dimensional Hopf manifolds, embedded as hypersurfaces.
\end{abstract}
\section{Introduction}

It's  a classical problem to classify neighborhoods $U$ of an embedded compact complex manifold $C$ into complex manifolds $M$ up to biholomorphism fixing $C$ point wise. In particular, Grauert called ``Das Formale Prinzip''(e.g \cite{kosarew-crelle,Hi81,hwang-annals}) the following problem : Assume there is  a {\it formal equivalence} between a neighborhood $U$ in $M$ and a neighborhood $U'$ in $M'$, in both of which $C$ is holomorphically embedded. Does such a formal equivalence give  rise to a genuine holomorphic equivalence between the (possibly smaller) neighborhoods? Here differential geometry and curvature enter into the play. Indeed, if this normal bundle is {\it negative}, then Grauert~\cite{grauert-embed} and Hironaka-Rossi~\cite{hironaka-rossi} 
proved a rigidity statement: formally equivalent neighborhoods of $C$ in $M$ and $M'$ are actually biholomorphic. In the case the normal bundle is {\it positive}, Griffiths~\cite{griffiths-ext2} proved that there are only
finitely many obstructions to being formally equivalent to $U$. He then
proved, under some assumptions, that if the neighborhood $U'$ is formally
equivalent to $U$, then it is also biholomorphic to it (see \cite{GS-nf} for moduli spaces). 

A holomorphic embedding of $C$ into $M$ gives rise to another natural embedding, namely the embedding of $C$ as the zero section in its normal bundle $T_{C|M}$. Equivalence of a neighborhood of $C$ in $M$ with a neighborhood of $C$ in $T_{C|M}$ can be seen as kind of "linearization". In that case, we shall say that the neighborhood is "fully linearizable". When the normal bundle is {\it flat},  dynamical systems methods are more appropriate. Indeed, Arnol'd~\cite{arnold-embed}(see also \cite{arnold-geometrical}[Chap.~5, sect.~27]) studied the
embedding of an elliptic curve into a complex surface when the normal bundle has
zero self-intersection number (i.e. {\it flat}). He showed that under a {\it small divisors condition}, the neighborhood is biholomorphic to a (unspecific) neighborhood of the zero section of the normal bundle $N_{C|M}$. This was generalized by Ilyashenko and Pyartly~\cite{ilyashenko-pyartly-embed} to direct product of 1 dimensional tori.

Another somehow similar problem, following Ueda \cite{Ued82}, is the existence of an holomorphic foliation in a neighborhood of $C$, having $C$ as a leaf and can be obtained as through a "vertical linearization" of neighborhoods. 

In the recent year, there has a renewal of interest in these questions (e.g. \cite{loray-moscou,stolo-koike}), in particular in the case of flat normal bundle \cite{GS21,koike-ueda,koike-ueda0, stolo-koike}. 

In this article, we shall answer to these questions for some special (possibly non-compact) embedded manifolds, namely for connected abelian complex Lie group. Our main results are Theorem \ref{ueda-thm} and Theorem \ref{full-toroidal}.
 One can easily show that every any connected abelian complex Lie group is isomorphic to complex Lie group to $\C^n/ \Lambda$ where $\Lambda$ is a discrete subgroup of $\C^n$.
	It is compact if and only if it is a torus.
These groups are the simplest complex Lie groups.
According to \cite{abe-kopferman01}, toroidal groups (as the non-trivial component of connected abelian complex Lie group) are the missing link between torus groups and any complex Lie groups (in Lie group theory), between compact torus groups and Stein groups (in several complex variables).
The first non-trivial example was found when Pierre Cousin studied meromorphic functions of two variables with triple periods in \cite{Cou10} and discovered that such a group does not contain $\C$ or $\C^*$ as direct summand.
Kopfermann studied systematically toroidal groups in \cite{Kop64} and Morimoto proved a fundamental holomorphic reduction theorem for complex Lie groups in \cite{Mor65}.
We refer to the book \cite{abe-kopferman01} for further information and more recent results.

Our recent work related to embedded tori \cite{GS,stolo-wu-ueda} will serve as a guideline to develop the required technics.

On the other hand, Hopf manifolds are compact complex manifolds the embedding of which has been studied in the cases of codimension-$1$ surfaces \cite{Tsu84}. 
 
 We shall use our recent work \cite{GS21} to show that $n$-dimensional general Hopf manifolds holomorphically embedded as hypersurface have a germ of neighborhood holomorphically equivalent to a germ of neighborhood of the zero section in its normal bundle (see Theorem \ref{hopf}).

\section{Toroidal manifolds}
\subsection{Preliminaries}
We first recall some properties of abelian complex Lie groups from \cite{abe-kopferman01}.
The following class of groups is of particular importance:
\begin{mydef}
An abelian complex Lie group $F$ is called a toroidal group if $H^0(F, \cO_F)=\C$, i.e.,   if all holomorphic functions are constant.
\end{mydef}
Consider $T$ a connected abelian complex Lie group of complex dimension $n$.
We have the following decomposition theorem due to Remmert-Morimoto (e.g \cite{Mor65}, \cite{abe-kopferman01}[Theorem 1.1.5])~:
\begin{mythm}
Every connected abelian complex Lie group is holomorphically isomorphic to a
$$\C^a \times (\C^*)^{b} \times X_0$$
with a toroidal group $X_0$.
The decomposition is unique.
Moreover, the group is Stein if and only if the toroidal group component in the above decomposition is trivial.
\end{mythm}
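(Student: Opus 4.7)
The plan is to pass to the universal cover and isolate the toroidal part using integral characters. Write $T = \C^n/\Lambda$ with $\Lambda \subset \C^n$ a discrete additive subgroup, and let $\Gamma := \{\chi : \C^n \to \C \text{ $\C$-linear} : \chi(\Lambda) \subset \Z\}$, a finitely generated free abelian group that parameterizes holomorphic characters of $T$ via $e^{2\pi i \chi}$. Set $W := \bigcap_{\chi \in \Gamma} \ker(\chi)$, a complex subspace of $\C^n$, and take as candidate toroidal factor $X_0 := W/(\Lambda \cap W)$.

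The main obstacle is to verify that $X_0$ is toroidal. A holomorphic function on $X_0$ lifts to a $(\Lambda \cap W)$-periodic holomorphic function on $W$, which I would expand as a Fourier-type series indexed by $\C$-linear characters $\chi_W : W \to \C$ satisfying $\chi_W(\Lambda \cap W) \subset \Z$. Each such $\chi_W$ extends to some $\chi \in \Gamma$ (by freely assigning integer values on a $\Z$-basis of $\Lambda$ modulo $\Lambda \cap W$ compatibly with $\C$-linearity), and $\chi$ vanishes on $W$ by the very definition of $W$, so $\chi_W = \chi|_W \equiv 0$ and only the constant term of the Fourier series survives. A parallel argument shows that the Stein quotient $T/X_0$ has totally real lattice $\pi(\Lambda)$ in $\C^n/W$: any imaginary direction in the real span of $\pi(\Lambda)$ would produce an extra element of $\Gamma$ and shrink $W$, contradicting its definition. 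The classical description of Stein connected abelian complex Lie groups (those whose lattice has a totally real $\R$-span) then yields $T/X_0 \simeq \C^a \times (\C^*)^b$ via the exponential map $z \mapsto e^{2\pi i z}$ in the $b$ totally real directions.

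To split the short exact sequence $0 \to X_0 \to T \to \C^a \times (\C^*)^b \to 0$ as abelian complex Lie groups, I would lift a $\Z$-basis $\mu_1, \ldots, \mu_b$ of the Stein-quotient lattice to elements $\lambda_1, \ldots, \lambda_b \in \Lambda$. Since the $\mu_i$ are $\C$-linearly independent in $\C^n/W$ (they $\C$-span the $\C^b$ factor of $T/X_0$), so are the $\lambda_i$ in $\C^n$, and their $\C$-span $P$ satisfies $P \cap W = 0$. Choosing any complex complement $E_a$ of $W \oplus P$ of dimension $a$, the direct-sum decomposition $\C^n = W \oplus P \oplus E_a$ descends to the desired product decomposition. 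For uniqueness, $X_0$ is intrinsic as the identity component of the common kernel of all holomorphic characters $T \to \C^*$; once $X_0$ is fixed, the Stein quotient $T/X_0 \simeq \C^a \times (\C^*)^b$ has $(a,b)$ determined by its topology (e.g., $\pi_1$ gives $b$ and complex dimension gives $a+b$). Finally, the Stein criterion follows immediately: a product of copies of $\C$ and $\C^*$ is Stein as a product of Stein manifolds, while any nontrivial toroidal factor carries no nonconstant holomorphic function, obstructing holomorphic separation of points in the fibers of $T \to T/X_0$ and hence the Stein property of $T$.
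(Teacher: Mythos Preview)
The paper does not give its own proof of this theorem: it is quoted as the Remmert--Morimoto decomposition and referenced to \cite{Mor65} and \cite{abe-kopferman01}[Theorem~1.1.5]. So there is no in-paper argument to compare against; your proposal must be judged on its own.

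Your overall architecture is the standard one (identify the toroidal kernel via holomorphic characters, show the quotient is Stein, split the sequence), and the parts about discreteness of the image lattice, the splitting via lifted lattice vectors, uniqueness, and the Stein criterion are fine. The gap is in the ``$X_0$ is toroidal'' step, specifically in the extension claim. You assert that every $\C$-linear $\chi_W:W\to\C$ with $\chi_W(\Lambda\cap W)\subset\Z$ extends to some $\chi\in\Gamma$ ``by freely assigning integer values on a $\Z$-basis of $\Lambda$ modulo $\Lambda\cap W$ compatibly with $\C$-linearity''. But these two requirements can conflict: once $\chi$ is fixed on $W$, its value on a lattice element $\lambda\notin W$ is constrained by the $W$-component of $\lambda$, and if the images of the chosen $\Z$-basis in $\C^n/W$ satisfy a $\C$-linear relation (which they typically do, since the rank of $\Lambda/(\Lambda\cap W)$ generally exceeds $\dim_\C(\C^n/W)$), then the integrality conditions become an overdetermined linear system with no reason to be solvable. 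You have not used the defining property of $W$ at this point, and without it the extension need not exist.

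A related issue is the Fourier step. Writing a $(\Lambda\cap W)$-periodic holomorphic function as a series indexed by $\C$-linear integral characters is correct, but it is not a tautology: when the generators of $\Lambda\cap W$ are $\C$-linearly dependent one must first use holomorphy and Liouville-type arguments on the maximal complex subspace of $\R(\Lambda\cap W)$ to kill the ``bad'' Fourier modes. In effect you are invoking the characterisation ``$\C^m/\Lambda'$ is toroidal $\Leftrightarrow$ no nonzero $\C$-linear integral character'', which is exactly the nontrivial equivalence underlying the theorem. The standard route (as in Morimoto or \cite{abe-kopferman01}) proves this equivalence first and then deduces the decomposition; your sketch reverses the dependency and so leaves the core analytic input unjustified.
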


We study the toroidal group part $X_0$ (of complex dimension $n-a-b$) in more details denoted in the following by $T$.

By \cite[Sect. 1]{abe-kopferman01} one can choose a basis
 $P\in \mathrm{Mat}(q, 2q, \C)$  the period matrix of a compact complex torus and $R \in \mathrm{Mat}(n-q-a-b, 2q, \R)$ is a real matrix, called gluing matrix, satisfying the irrationality condition
\[ \forall \sigma \in \Z^{n-q-a-b}\colon \sigma^tR\not \in \Z^{2q}\]
such that the given toroidal group $T$ satisfies
\[ T = \C^{n-a-b} / \begin{pmatrix}
 I_{n-q-a-b} & R\\
 0& P\end{pmatrix}\Z^{n+q-a-b}.\]
Here $q$ is called the rank of $T$. The above choice of basis induces (by projection on the last $q$ variables) a $(\C^*)^{n-q-a-b}$-principal bundle structure $T\to T_0 =  \C^q/P\Z^{2q}$ over a compact complex torus.
We may also assume that
$$P=(I_q,P_0)$$
for some $P_0 \in \mathrm{Mat}(q, q, \C)$
such that $P_0$ is the period matrix of $T_0$.
Write
$$R=(R_1,R_2)$$
such that $T$ admits a covering
\begin{equation}
 (\C^*)^{n-a-b} \cong \C^{n-a-b} / \begin{pmatrix}
 I_{n-q-a-b} & R_1\\
 0& I_q \end{pmatrix}\Z^{n-a-b}\label{cover}
\end{equation}

 where the identification is given by

 \begin{align*}
(z_i) \in \C^{n-a-b} / \begin{pmatrix}
 I_{n-q-a-b} & R_1\\
 0& I_q \end{pmatrix}\Z^{n-a-b} \mapsto &\\
 (e^{2 \sqrt{-1} \pi(z_1-\sum_{j=1}^q(R_1)_{1,j}z_{n-q-a-b+j})}, \cdots,e^{2 \sqrt{-1} \pi(z_{n-q-a-b}-\sum_{j=1}^q(R_1)_{n-q-a-b,j}z_{n-q-a-b+j})},&\\
 	e^{2 \sqrt{-1} \pi z_{n-q-a-b+1}}, \cdots, e^{2 \sqrt{-1} \pi z_{n-a-b}} ).&
 \end{align*}
 Recall the real parametrization following \cite[1.1.13]{abe-kopferman01} from which we construct a fundamental domain.
 Consider the $n+q-a-b$ columns of the matrix
 $$
 \begin{pmatrix}
 I_{n-q-a-b} & R\\
 0& P\end{pmatrix}
$$
which are $\R-$linearly independent.
Take complements to get a $\R-$linear basis $\gamma_i(i \leq 2(n-a-b))$
whose coordinates in matrix correspond to
 $$
 \begin{pmatrix}
 I_{n-q-a-b} & R_1 & R_2 & R_3\\
 0& I_q & P_0 & P_1\end{pmatrix}=[\gamma_1,\ldots,\gamma_{2(n-a-b)}].
$$
Thus we have diffeomorphism
$$(\R/\Z)^{n+q-a-b} \times \R^{n-q-a-b} \to T,$$
$$(t_i) \mapsto \sum t_i \gamma_i.$$
Thus a fundamental domain is
$$\omega_0=\{\sum_{1 \leq i \leq n+q-a-b} t_i\gamma_i+\sum_{ n+1+q-a-b\leq j \leq 2(n-a-b) } \R \gamma_j \in \C^{n-a-b}, t_i \in [0,1[  \}.$$
Let us set 
\begin{eqnarray}
	\mathfrak{z}_{\ell}(z)&:=& Z_{\ell}-\sum_{j=1}^q(R_1)_{\ell,j}Z_{n-q-a-b+j},\quad 1\leq\ell\leq n-q-a-b\label{frakz}\\
	\mathfrak{z}_{\ell}(z)&:=& Z_{\ell},\quad n-q-a-b+1\leq \ell\leq n-a-b.\nonumber
\end{eqnarray}

The variables $\mathfrak{z}_{\ell}$ are called standard coordinates of the toroidal group $T$ (cf. \cite{abe-kopferman01}[1.1.11]).
The previous chosen coordinates are called toroidal coordinates (cf. \cite{abe-kopferman01}[1.1.12]).
Under the standard coordinates, the lattice defined by $\gamma_i(1 \leq i \leq 2(n-a-b))$ of $\C^{n-a-b} \simeq \R^{2(n-a-b)}$ corresponds to the columns of the matrix
$$
\begin{pmatrix}
 I_{n-q-a-b} & 0 & R_2-R_1P_0 & R_3-R_1P_1\\
 0& I_q & P_0 & P_1\end{pmatrix}.
 $$
 In the following, we use symbols $\omega_\bullet$ to define domains in toroidal coordinates and symbols $\Omega_\bullet$ to define domains in standard coordinates.
 The choice of standard coordinates is for the propose that the domains $\Omega_\bullet$ are thus Reinhardt (i.e. a domain such that $$ (e^{\sqrt{-1} \theta_1} z_1, \cdots , e^{\sqrt{-1} \theta_n} z_n, e^{\sqrt{-1} \theta_{n+1}} v_1, \cdots, e^{\sqrt{-1} \theta_{n+d}} v_d)\in\Omega_{\bullet}$$ for
every $z = (z_1, \cdots , z_n, v_1, \cdots, v_d) \in \Omega_{\bullet}$ and $\theta_1, \cdots \theta_n, \theta_{n+1}, \cdots, \theta_{n+d} \in \R$). Consider
$$\Omega_0=\{(e^{2 \sqrt{-1} \pi\mathfrak{z}_1},\cdots, e^{2 \sqrt{-1} \pi\mathfrak{z}_{n-a-b}}),\quad (Z_1, \cdots, Z_{n-a-b} ) \in \omega_0\}.$$
with
\begin{align*}
\omega_0&=\left\{ 
\begin{pmatrix}
	I_{n-q-a-b} & R_1 & R_2 & R_3\\
	0& I_q & P_0 & P_1\end{pmatrix}(w_1, \cdots, w_{2(n-a-b)} )^t,\right.\\
&\left.w_i \in [0,1[, 1 \leq i \leq n+q-a-b, w_j \in \R,  j \geq n+q-a-b+1\right\}.
\end{align*}
In other words,
$$\Omega_0=\{(e^{2 \pi \sqrt{-1} \mathfrak{z}_1}, \cdots, e^{2 \pi \sqrt{-1} \mathfrak{z}_{n-a-b}}),$$$$  (\mathfrak{z}_1, \cdots, \mathfrak{z}_{n-a-b} )^t= \begin{pmatrix}
 I_{n-q-a-b} & 0 & R_2-R_1P_0 & R_3-R_1P_1\\
 0& I_q & P_0 & P_1\end{pmatrix} (w_1, \cdots, w_{2(n-a-b)} )^t,
 $$$$w_i \in [0,1[, 1 \leq i \leq n+q-a-b, w_j \in \R,  j \geq n+q-a-b+1   \}.$$
Thus $\Omega_0$ is a Reinhardt domain.

Define the numbers $\tau_{i,j}(1 \leq i \leq n-a-b,1 \leq j \leq q)$ as the $(i,j)-$th component of the matrix
$$
\begin{pmatrix}
 R_2-R_1P_0 \\
  P_0 \end{pmatrix}=[\gamma'_1, \cdots,\gamma'_q].
$$

Consider the following Reinhardt domains
$$\omega_\epsilon=\{\sum_{1 \leq i \leq n-a-b} t_i\gamma_i+\sum_{n-a-b \leq j \leq n+q-a-b} s_j\gamma_j+\sum_{ n+1+q-a-b\leq h \leq 2(n-a-b) } \R \gamma_h \in \C^{n-a-b},$$$$ t_i \in [0,1[,\quad s_j \in ]-\epsilon,1+\epsilon[  \}.$$
$$\Omega_\epsilon=\{(e^{2 \sqrt{-1} \pi\mathfrak{z}_1},\cdots, e^{2 \sqrt{-1} \pi\mathfrak{z}_{n-a-b}}),\quad (Z_1, \cdots, Z_{n-a-b} ) \in \omega_{\epsilon}\}.$$
In other words,
$$\Omega_\epsilon=\{(e^{2 \sqrt{-1} \pi\mathfrak{z}_1}, \cdots, e^{2 \sqrt{-1} \pi\mathfrak{z}_{n-a-b}}),$$
$$  (\mathfrak{z}_1, \cdots, \mathfrak{z}_{n-a-b} )^t= \begin{pmatrix}
 I_{n-q-a-b} & 0 & R_2-R_1P_0 & R_3-R_1P_1\\
 0& I_q & P_0 & P_1\end{pmatrix} (w_1, \cdots, w_{2(n-a-b)} )^t,
 $$$$w_i \in [0,1[, 1 \leq i \leq n-a-b,  w_j \in ]-\epsilon,1+\epsilon[,n-a-b+1 \leq  j \leq n+q-a-b,$$$$ w_h \in \R, h \geq n+q-a-b+1   \}.$$
$$\omega_{\epsilon,R}=\{\sum_{1 \leq i \leq n-a-b} t_i\gamma_i+\sum_{n-a-b \leq j \leq n+q-a-b} s_j\gamma_j+\sum_{ n+1+q-a-b\leq h \leq 2(n-a-b) } r_h \gamma_h \in \C^{n-a-b},$$$$ t_i \in [0,1[, s_j \in ]-\epsilon,1+\epsilon[, r_h \in ]-R,R[  \}.$$
Consider also the following Reinhardt domains
$$\Omega_{\epsilon,R}=\{(e^{2 \sqrt{-1} \pi\mathfrak{z}_1},\cdots, e^{2 \sqrt{-1} \pi\mathfrak{z}_{n-a-b}}),\quad (Z_1, \cdots, Z_{n-a-b} ) \in \omega_{\epsilon,R}\}$$
In other words,
$$\Omega_{\epsilon,R}=\{(e^{2 \pi \sqrt{-1} \mathfrak{z}_1}, \cdots, e^{2 \pi \sqrt{-1} \mathfrak{z}_{n-a-b}}),$$$$  (\mathfrak{z}_1, \cdots, \mathfrak{z}_{n-a-b} )^t= \begin{pmatrix}
 I_{n-q-a-b} & 0 & R_2-R_1P_0 & R_3-R_1P_1\\
 0& I_q & P_0 & P_1\end{pmatrix} (w_1, \cdots, w_{2(n-a-b)} )^t,
 $$$$w_i \in [0,1[, 1 \leq i \leq n-a-b,  w_j \in ]-\epsilon,1+\epsilon[,n-a-b+1 \leq  j \leq n+q-a-b,$$$$ w_h \in ]-R,R[, h \geq n+q-a-b+1   \}.$$
 In particular, $\Omega_{\epsilon,R}$ is relatively compact in $(\C^*)^{(n-a-b)}$.
 Note that as Reinhardt domains, any holomorphic function on any of the above domains admits a unique Laurent series.
We have that
$$\cup_{R \in \R^+} \Omega_{\epsilon,R}=\Omega_{\epsilon}. $$
\begin{myex}
A basic example of toroidal group is the following.
$$T=\C^{2} / \begin{pmatrix}
 1 & a & b\\
 0& 1 & \sqrt{-1} \end{pmatrix}\Z^{3}$$
 such that $(a \Z+b\Z) \cap \Q=0$ with $a,b \in \R$.
 After a shear transformation (from toroidal coordinates), we have in standard coordinates,
 $$T=\C^{2} / \begin{pmatrix}
 1 & 0 & b-a \sqrt{-1}\\
 0& 1 & \sqrt{-1} \end{pmatrix}\Z^{3}.$$
 Take a $\R-$basis corresponding to
 $$ \begin{pmatrix}
 1 & 0 & b-a \sqrt{-1} & \sqrt{-1}\\
 0& 1 & \sqrt{-1}&0 \end{pmatrix}.$$
 Then $$\Omega_\epsilon=\{(e^{2 \pi \sqrt{-1} (t_1+(b-a\sqrt{-1})t_3)+\sqrt{-1}t_4}, e^{2 \pi \sqrt{-1} (t_2+\sqrt{-1}t_3)}),$$$$t_1 \in [0,1[,t_2 \in [0,1[, t_3 \in ]-\epsilon,1+\epsilon[,t_4 \in \R\},$$
 $$\Omega_{\epsilon,R}=\{(e^{2 \pi \sqrt{-1} (t_1+(b-a\sqrt{-1})t_3)+\sqrt{-1}t_4}, e^{2 \pi \sqrt{-1} (t_2+\sqrt{-1}t_3)}),$$$$t_1 \in [0,1[,t_2 \in [0,1[, t_3 \in ]-\epsilon,1+\epsilon[,t_4 \in ]-R,R[\}.$$
\end{myex}
For the convenience of the readers, recall the following easy lemma.
\begin{mylem}
Let $\Omega_i(i \in \N)$ be a family of Reinhardt domains in some fixed $\C^N$.
Let $f_i \in \cO(\Omega_i)$ such that $f_i$ have the same Laurent power series.
Then on any $i,j$, $f_i=f_j$ on $\Omega_i \cap \Omega_j$ which induces a holomorphic function on $\cup_i \Omega_i$.
\end{mylem}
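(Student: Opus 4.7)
The plan is to exploit the global representation of holomorphic functions on Reinhardt domains by their Laurent series. Writing each $f_i$ as $f_i(z) = \sum_{\alpha \in \Z^N} c_\alpha^{(i)} z^\alpha$, convergent absolutely and uniformly on compact subsets of $\Omega_i$, the hypothesis says that $c_\alpha^{(i)}$ is independent of $i$; call the common coefficients $c_\alpha$. The single formal series $S(z) := \sum_\alpha c_\alpha z^\alpha$ then converges on each $\Omega_i$ and equals $f_i$ pointwise there.

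For any pair $(i,j)$ and any $z \in \Omega_i \cap \Omega_j$, the series $S(z)$ is simultaneously an expression for $f_i(z)$ (since $z \in \Omega_i$) and for $f_j(z)$ (since $z \in \Omega_j$), whence $f_i(z) = f_j(z)$ at that point. The functions therefore agree on all pairwise overlaps and glue to a single map $f : \bigcup_i \Omega_i \to \C$ with $f|_{\Omega_i} = f_i$, which is holomorphic because holomorphy is a local condition.

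The only nontrivial input is the classical fact that the Laurent expansion of a holomorphic function on a Reinhardt domain converges to the function on the entire domain (and not merely on some maximal polydisc inside it). The coefficients are computed coordinate-freely by $c_\alpha = (2\pi \sqrt{-1})^{-N} \int_{T_r} f(z)\, z^{-\alpha}\, \frac{dz}{z}$ over any distinguished torus $T_r \subset \Omega_i$, and independence of the radius $r$ (via Cauchy) is what makes the notion of ``having the same Laurent series'' well-posed. This fact is already tacitly invoked by the paper in the paragraph preceding the statement, where it notes that ``as Reinhardt domains, any holomorphic function on any of the above domains admits a unique Laurent series.'' Once this is granted, the lemma reduces to the pointwise gluing argument above; there is no real obstacle, as the work has been absorbed into choosing the correct torus-invariant class of domains.
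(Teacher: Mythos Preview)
Your argument is correct and is precisely the standard one; the paper itself does not supply a proof, presenting the lemma only as an ``easy lemma'' recalled for the reader's convenience. There is nothing to add: the global Laurent representation on Reinhardt domains (which the paper also invokes just before the statement) does all the work, and your gluing step is the obvious conclusion.
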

In the following, we will choose suitable conditions such that all the cohomological operators have a unique solution in the ring of Laurent series.
Thus to show the existence of the vertical/full linearization,
it is enough to show the convergence of Laurent series for each Reinhardt domain in a well-chosen family of Reinhardt domains whose union is an open neighborhood of the given connected abelian complex Lie group.
\subsection{Preliminary lemmas}
In what follows, we give a non-compact version of some results of \cite{GS}, the proofs of which are identical.
The following lemma is a verbatim adaptation of \cite[Lemma 4.1]{GS} which relates the covering of the submanifold and the covering of its neighborhood.
\begin{mylem}
Let $C$ be a complex manifold. Let $\pi\colon \tilde{C}\to C$ be a holomorphic covering and $\pi(x_0^*)=x_0$. Suppose that $(M,C)$ is a holomorphic neighborhood of $C$. There is a neighborhood $U$ in $M$ of $C$ 
 and  a holomorphic neighborhood  $\tilde U$ of $\tilde{C}$ such that $p\colon\tilde U\to U$  is an extended covering of the covering $\pi\colon \tilde{C}\to C$ and $C$
(resp. $\tilde{C}$)  is a smooth strong retract of $U$ (resp. $\tilde U$). Consequently, $$
  \pi_1(\tilde U,x_0^*)=\pi_1(\tilde{C},x_0^*), \quad \pi_1(U,x_0)=\pi_1(C,x_0).
$$
\end{mylem}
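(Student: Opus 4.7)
The plan is to build $U$ as a tubular neighborhood of $C$ in $M$ that deformation retracts onto $C$, and then produce $\tilde U$ via standard covering space theory applied to the inclusion $C\hookrightarrow U$, which will induce an isomorphism $\pi_1(U,x_0)\cong \pi_1(C,x_0)$. The holomorphic structure will come for free because coverings of complex manifolds carry a unique complex structure making the covering map a local biholomorphism.

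First I would fix a smooth Hermitian metric on the normal bundle $N_{C|M}$ and a smooth positive function $\rho\colon C\to\R^+$ (needed because, in the toroidal setting, $C$ may be non-compact). Using a smooth extension of the exponential map, or more concretely a collar-type diffeomorphism between the $\rho$-disc bundle in $N_{C|M}$ and an open set $U\subset M$ containing $C$, I would get a smooth retraction $r\colon U\to C$ with $r|_C=\id_C$, together with a homotopy $H_s(x)$ that rescales the normal component by $1-s$; this exhibits $C$ as a strong deformation retract of $U$. Smoothness and the choice of $\rho$ take care of the non-compact case: $\rho$ can be taken as small as needed near ``infinity''.

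Next, the retraction yields $\pi_1(U,x_0)=\pi_1(C,x_0)$. By the classification of coverings, the subgroup $H\leq\pi_1(C,x_0)$ corresponding to $\pi\colon\tilde C\to C$ determines a (connected) topological covering $p\colon\tilde U\to U$ through the identification $\pi_1(U,x_0)\cong\pi_1(C,x_0)$. Pulling back the complex structure of $U$ via $p$ endows $\tilde U$ with a canonical complex structure making $p$ holomorphic. To see that $\tilde U$ really extends $\tilde C$, observe that $p^{-1}(C)\to C$ is a covering classified by the same subgroup $H$ (because $\iota_*\colon\pi_1(C)\to\pi_1(U)$ is an isomorphism with inverse $r_*$); hence, selecting the component of $p^{-1}(C)$ containing a lift $x_0^*$, we recover precisely $\pi\colon\tilde C\to C$. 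Finally, the retraction $r$ and its homotopy lift to $\tilde U$ by the homotopy lifting property, giving a smooth strong deformation retraction of $\tilde U$ onto $\tilde C$, and the $\pi_1$ equalities then follow.

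The genuine obstacle is the \emph{non-compactness} of $C$: the classical statement in \cite[Lemma 4.1]{GS} is for compact $C$, where the tubular neighborhood exists with uniform radius and everything is straightforward. Here one must allow the ``thickness'' of $U$ along $C$ to shrink near infinity, which is why $\rho$ is needed. One also has to verify that the lifted retraction on $\tilde U$ is still defined globally and smooth; this is automatic from the homotopy lifting property but should be checked carefully to ensure that $\tilde U$ is really an open neighborhood of $\tilde C$ in the complex manifold obtained from the covering, rather than merely a topological one.
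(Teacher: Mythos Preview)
Your proposal is correct and follows the standard argument; the paper does not supply its own proof but simply declares the lemma a ``verbatim adaptation of \cite[Lemma~4.1]{GS}'', so there is nothing to compare against beyond noting that your sketch is exactly the expected construction (tubular neighborhood, covering-space classification via the induced $\pi_1$-isomorphism, then homotopy lifting), with the appropriate care about a shrinking radius function $\rho$ in the non-compact case.
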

Applying the above lemma to $(N_C,C)$ with $C$ a connected abelian complex Lie group and a covering $\pi|_{\tilde{C}}\colon \tilde{C}\to C$ such that $\tilde{C}$ is a product of several copies of $\C$ and cylinders $\C/\Z\simeq\C^*$, we have a covering $\hat\pi\colon \widetilde{N_C}\to N_C$ such that
$$
\tilde{C}\subset\widetilde{N_C}, \quad \pi_1(\widetilde{N_C},x_0^*)=\pi_1(\tilde{C},x_0^*), \quad \pi_1(N_C,x_0)=\pi_1(C,x_0).
$$
The following result is a verbatim adaptation of \cite{GS}[Proposition 4.3] on the classification of pair $(C,M)$~:
\begin{myprop}
Let $C$ be the connected abelian complex Lie group and $\pi \colon\tilde{C}=\C^{n}/{\Z^{n-a}}\to C$ be the covering constructed in (\ref{cover}).
Let $(M,C)$ be a neighborhood of $C$. Assume that $N_C$ admits transition functions that
are locally constant and diagonalizable matrices (e.g. Hermitian flat).
Then
$(M,C)$ is holomorphically equivalent to the quotient space of an open neighborhood of $\tilde{C}$ in $\widetilde{N_C}$  by the Deck transform of $\widetilde{N_C}$.
Moreover,
one can take $\hat\omega_{\epsilon_0}$
some Stein neighborhood of
$$\C^a \times (\C^*)^{b} \times \omega_{\epsilon_0} \times \{0\}$$
(for suitable choice of $\epsilon_0$)
such that $(M,C)$ is biholomorphic to the quotient of $\hat\omega_{\epsilon_0}$ by  $\tau^0_1,\dots,\tau^0_q$.
Here for any $1 \leq i \leq q$, $\tau^0_i$
corresponds to
translation by $ \gamma_{i+n-a-b}$ (in toroidal coordinates) when restricting on $\C^a \times (\C^*)^{b} \times \omega_\epsilon \times \{0\}$.

An equivalent way in standard coordinates can be reformulated as follows~:
Recalling notation (\ref{frakz}), let $$\Omega_{\epsilon_0}:=\{(e^{2 \sqrt{-1}\pi \mathfrak{z}_1},\cdots, e^{2 \sqrt{-1}\pi  \mathfrak{z}_{n-a-b}}),\quad (Z_1, \cdots, Z_{n-a-b} ) \in \omega_{\epsilon_0}\}.$$ 
Let $\tau_j$ be the mapping defined on 
some Stein neighborhood $\hat\Omega_{\epsilon_0}$ of
$$\C^a \times (\C^*)^{b} \times \Omega_{\epsilon_0} \times \{0\}$$
such that the restriction of $\tau_j$ on $\C^a \times (\C^*)^{b} \times \Omega_{\epsilon_0} \times \{0\}$
corresponds to translations by $\gamma'_j$ in standard coordinates (before taking their exponentials).
Then $\tau_1,\dots, \tau_q$ commute pairwise   wherever they are defined, i.e.
$$
\tau_i\tau_j(x,y,h,v)=\tau_j\tau_i(x,y,h,v)\quad  \forall  i\neq j
$$
for $(x,y,h,v)\in \hat\Omega_{\epsilon_0}\cap \tau_i^{-1}\hat\Omega_{\epsilon_0}\cap \tau_j^{-1}\hat\Omega_{\epsilon_0}$.
Notice that
$(M,C)$ is also biholomorphic to the quotient of $\hat\Omega_{\epsilon_0}$ by  $\tau_1,\dots,\tau_q$.

On the other hand, let $(\tilde M,C)$ be another  such (not necessarily Stein but open) neighborhood having the corresponding generators $\tilde\tau_1,\dots,\tilde\tau_n$  defined on $\hat\Omega_{\tilde\epsilon_0}$. Then $(M,C)$ and $(\tilde M,C)$ are holomorphically equivalent if and only if there is a  biholomorphic mapping $F$  from $\hat\Omega_{\epsilon}$ into $\hat\Omega_{\tilde \epsilon}$
for some positive $\epsilon,\tilde \epsilon$  such that
$$
F \tilde\tau_j(x,y,h,v)=\tau_jF(x,y,h,v),\ j=1,\ldots, n,
$$
wherever   both sides are defined, i.e. $(x,y,h,v)\in \hat\Omega_{\tilde\epsilon}\cap \tilde\tau_j^{-1}\hat\Omega_{\epsilon}\cap \hat\Omega_{\epsilon}\cap F^{-1}\hat\Omega_{\epsilon}.$
\end{myprop}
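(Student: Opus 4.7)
The plan is to adapt the proof of \cite[Proposition 4.3]{GS} to this non-compact toroidal setting, proceeding in four steps. First I would apply the previous covering lemma to $(N_C,C)$ and to $(M,C)$, producing extended coverings $\hat\pi\colon\widetilde{N_C}\to N_C$ and $p\colon\tilde U\to U$ both restricting on the zero section, respectively the central submanifold, to $\pi\colon\tilde{C}\to C$. The hypothesis on $N_C$ realises it as a direct sum of flat holomorphic line bundles on $C$; pulled back to the Stein manifold $\tilde{C}\simeq\C^a\times(\C^*)^{n-a}$ (where $H^1(\tilde{C},\cO)=0$), these line bundles become holomorphically trivial, so $\widetilde{N_C}\to\tilde{C}$ is identified with $\tilde{C}\times\C^d$. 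The normal bundle of $\tilde{C}$ in $\tilde U$ agrees with the pullback of $N_C$, hence is also trivial, and a standard linearization argument on the Stein base $\tilde{C}$ yields a biholomorphism from a neighborhood of $\tilde{C}$ in $\tilde U$ onto a neighborhood of $\tilde{C}\times\{0\}$ in $\widetilde{N_C}$.

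Next I would describe the Stein neighborhood $\hat\omega_{\epsilon_0}$. The base factors $\C^a$ and $(\C^*)^b$ are Stein, and $\omega_{\epsilon_0}$ is a tubular thickening of a fundamental domain in $\C^{n-a-b}$; combined with a small polydisk in the trivial fiber direction the product is a Stein open set of $\widetilde{N_C}$ containing $\C^a\times(\C^*)^b\times\omega_{\epsilon_0}\times\{0\}$. The Deck group of $\widetilde{N_C}\to N_C$ is free abelian of rank $q$, generated by the $\tau_j^0$ which restrict on the zero section to translations by $\gamma_{n-a-b+j}$; choosing $\epsilon_0$ small enough makes these translations defined on sufficiently overlapping domains inside $\hat\omega_{\epsilon_0}$. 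The quotient of $\hat\omega_{\epsilon_0}$ by $\langle\tau_1^0,\ldots,\tau_q^0\rangle$ is a neighborhood of $C$, which by the first step is biholomorphic to $(M,C)$.

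The reformulation in standard coordinates is a linear change from $(Z_\ell)$ to $(\mathfrak{z}_\ell)$ followed by exponentiation in the principal-torus-bundle directions; it turns $\omega_{\epsilon_0}$ into the Reinhardt domain $\Omega_{\epsilon_0}$ and each $\tau_j^0$ into a map $\tau_j$ which on the zero section becomes translation by $\gamma'_j$. The pairwise commutation $\tau_i\tau_j=\tau_j\tau_i$ holds on the zero section because the lattice is abelian, and on the connected Reinhardt overlap $\hat\Omega_{\epsilon_0}\cap\tau_i^{-1}\hat\Omega_{\epsilon_0}\cap\tau_j^{-1}\hat\Omega_{\epsilon_0}$ each coordinate of $\tau_i\tau_j$ is a Laurent series, so the equality with the corresponding coordinate of $\tau_j\tau_i$ on the zero-section slice propagates by uniqueness of Laurent expansions to every point where both sides are defined.

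For the equivalence criterion I would lift any biholomorphism $\Phi\colon(M,C)\to(\tilde M,C)$ fixing $C$ pointwise to a biholomorphism $F$ between the two chosen Stein neighborhoods; fixing a base point above $x_0$ determines the lift uniquely, and $F$ automatically intertwines $\tilde\tau_j$ with $\tau_j$ since both descend to $\Phi$ on the respective quotients, while conversely any such equivariant $F$ descends to a biholomorphism of the quotients. The main obstacle will be the linearization in the first step (where the Stein structure of $\tilde{C}$ and the flatness of the normal bundle must be combined to trivialize the neighborhood of $\tilde C$ in $\tilde U$) and the bookkeeping of overlap domains in the second step, so that $\hat\omega_{\epsilon_0}$ is simultaneously Stein, large enough for every $\tau_j^0$ to act on a nonempty open subset, and small enough to inject into $M$ under the quotient map on the complement of the ramification data of the covering.
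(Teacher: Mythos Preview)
Your approach is the same as the paper's: the paper states this proposition as ``a verbatim adaptation of \cite{GS}[Proposition 4.3]'' and gives no further proof, so your four-step outline is in fact more detailed than what appears there, and steps one, two, and four match the intended argument.

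There is one small gap in step three. You argue that $\tau_i\tau_j=\tau_j\tau_i$ holds on the zero-section slice $\{v=0\}$ and then ``propagates by uniqueness of Laurent expansions'' to the full overlap. This does not work as stated: $\{v=0\}$ has complex codimension $d$, and two holomorphic functions agreeing on a complex submanifold of positive codimension need not agree on any open neighborhood. The correct argument is simpler and global: the $\tau_j$ are, by your own step one, restrictions to $\hat\Omega_{\epsilon_0}$ of deck transformations of the covering $p\colon\tilde U\to U$. The deck group is $\pi_1(U)/p_*\pi_1(\tilde U)\cong\pi_1(C)/\pi_*\pi_1(\tilde C)\cong\Z^q$, hence abelian, so the deck transformations commute as global biholomorphisms of $\tilde U$; the identity $\tau_i\tau_j=\tau_j\tau_i$ therefore holds automatically on any subdomain where both compositions are defined. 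You already invoke ``the lattice is abelian'' for the zero-section part, so you only need to observe that this abelianness is a statement about the full deck group, not just its restriction to $\tilde C$.
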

As in \cite{GS}[(4.7), (4.8)],  the deck transformations of $\widetilde{N_C}$ in this case can be written
as follows~: for any $1 \leq j \leq q$, $\forall (x,y, h',v') \in \C^a \times (\C^*)^{b} \times (\C^*)^{n-a-b} \times \C^d$~:
\begin{equation} \hat \tau_j(x,y,
 	h',v')
 	=
 	(x,y,T_jh',  M_jv') \label{tauhat},\quad \hat\tau_0(x,y,h',v'):=(x,y,h',v')
\end{equation}
for some diagonal matrix

Here, both indices $m_1$ and $m_2$ are $\geq 2$  so that both $m_1$ and $m_2$ are less or equal than $m-2$. Assuming by induction that (\ref{goal}) holds for all $m'<m$, we have
\begin{equation}\label{h'p}\nonumber
	\left\|\left[\left(I\right)^P\right]_{m_2}\right\|_{\epsilon_{m-1},R,r_{m-1}}\leq E_{m_2}\left[\left(\sum_{|Q|=2}^{m_2}R^{|Q|}(t+J^{m-1}(A(t))^{|Q|}\right)^{|P|}\right]_{m_2}.
\end{equation}
Indeed,
\begin{eqnarray*}
	\left[\left(I\right)^P\right]_{m_2}&=&\left[\prod_{i=1}^{n-a-b}((I)_{i})^{p_i}\right]_{m_2}\\
	&=&  \sum_{\sum_i(m_{i,1}+\cdots +m_{i,p_i})=m_2}\prod_{i=1}^{n-a-b}[(I)_{i}]_{m_{i,1}}\cdots [(I)_{i}]_{m_{i,p_i}}.
\end{eqnarray*}
Here,$(I)_{i}$ means the $i-$th component of term $(I)$.
According to $(\ref{concatenation})$ and by $(\ref{estim-h'})$, we have
\begin{align*}
 \left\|\prod_{i=1}^{n-a-b}[(I)_{i}]_{m_{i,1}}\cdots [(I)_{i}]_{m_{i,p_i}}\right\|_{\epsilon_{m-1},R,r_{m-1}}\leq &\\ \prod_{i=1}^{n-a-b}E_{m_{i,1}}\left[g_{m_{i,1}}(t)\right]_{m_{i,1}}\cdots E_{m_{i,p_i}}\left[g_{m_{i,p_i}}(t)\right]_{m_{i,p_i}}&\\
	\leq  \max_{2\leq |Q|\leq m_{2}}\eta_{Q, m_{2}}\prod_{i=1}^{n-a-b}\left[g_{m_{i,1}}(t)\right]_{m_{i,1}}\cdots \left[g_{m_{i,p_i}}(t)\right]_{m_{i,p_i}}.&
\end{align*}
Hence, we have
$$
\sum_{\sum_i(m_{i,1}+\cdots +m_{i,p_i})=m_2} \left\|\prod_{i=1}^{n-a-b}[(I)_{i}]_{m_{i,1}}\cdots [(I)_{i}]_{m_{i,p_i}}\right\|_{\epsilon_{m-1},R,r_{m-1}}\leq  E_{m_{2}}[g(t)^{|P|}]_{m_2}.
$$

Now we estimate $\left[ \partial^P_h \phi^v(T_ih,M_iv)\right]_{m_1}$ where attention is put on the choice of domain.
By Cauchy estimate $(\ref{Reps3})$, we have  by induction on $m\geq 2$, for any $m_1 < m$,
$$
||\left[ \partial^P_h \phi^v(T_ih,M_iv)\right]_{m_1}||_{\epsilon_{m_1},R, r_{m_1}}
\leq \frac{C (C')^{|P|}(\sum_{j,\pm} B^{\pm e_j}_{m_1}\eta_{m_1}+A_{m_1}\eta_{m_1})}{C''^{\nu+|P|}}
$$
where the $L^\infty-$norm over the domain $\hat\tau_i(\Omega_{\epsilon_{m_1}+\eta,R,r_{m_1}})$ follows from the fact that
 $\hat\tau_i(\Omega_{\epsilon_{m_1}+\eta,R,r_{m_1}})$
is contained in the convex hull of the union, over $j$, of the union of
$\hat\tau_j(\Omega_{\epsilon_{m_1},R,r_{m_1}})\cup \hat\tau_j^2(\Omega_{\epsilon_{m_1},R,r_{m_1}})$
(related to the coefficient of $B^{e_j}(t)$),
$\hat\tau_j^{-1}(\Omega_{\epsilon_{m_1},R,r_{m_1}}) \cup \hat\tau_j^{-2}(\Omega_{\epsilon_{m_1},R,r_{m_1}})$
(related to the coefficient of $B^{-e_j}(t)$)
and $\Omega_{\epsilon_{m_1},R,r_{m_1}} \cup\cup_{k=1}^q \hat\tau_k(\Omega_{\epsilon_{m_1},R,r_{m_1}})\cup\cup_{k=1}^q \hat\tau_k^{-1}(\Omega_{\epsilon_{m_1},R,r_{m_1}})$
(related to the coefficient of $A(t)$).
Here $C''$ is a constant independent of $m$. As a consequence, we have
\begin{align}
	||\left[\left(II\right)\right]_{m}||_{\epsilon_{m-1}, R,r_{m-1}}\leq &\sum_{m_1+m_2=m}\sum_{\dindice{P\in \Bbb N^{n-a-b}}{|P|\geq 1}}
	\frac{C (C')^{|P|}(A_{m_1}+\sum_{j,\pm} B^{\pm e_j}_{m_1})\eta_{m_1}}{C''^{\nu+|P|}}E_{m_{2}}[g(t)^{|P|}]_{m_2}\nonumber\\
	\leq & \sum_{m_1+m_2=m}\frac{C (A_{m_1}+\sum_{j,\pm} B^{\pm e_j}_{m_1})\eta_{m_1}}{C''^\nu} \left[E_{m_{2}}\sum_{\dindice{P\in \Bbb N^{n-a-b}}{|P|\geq 1}}
	\left(C' {g(t)}/C''\right)^{|P|}\right]_{m_2}\nonumber\\
	\leq & \frac{C }{C''^\nu} \left(\max_{m_1+m_2=m}\eta_{m_1}E_{m_2}\right)\times\nonumber\\
	&\times\left[( A(t)+\sum_{j,\pm} B^{\pm e_j}(t)) \left(\left(\frac{1}{1-C' g(t)/C''}\right)^{n-a-b}-1\right)\right]_m.\label{estim-h''}
\end{align}
Collecting estimates $(\ref{estim-h'})$ and $(\ref{estim-h''})$, we obtain
\begin{align*}\nonumber
		\left\|L^v_i[\phi^v]_m 	\right\|_{\epsilon_{m-1}, R,r_{m-1}}& \leq\left[ E_m g(t)+ \frac{C }{C''^\nu} \left(\max_{m_1+m_2=m}\eta_{m_1}E_{m_2}\right)\times\right.\\
	&\left.\times(A(t)+\sum_{j,\pm} B^{\pm e_j}(t)) \left(\left(\frac{1}{1-C' g(t)/C''}\right)^{n-a-b}-1\right)\right]_m.
\end{align*}
We solve the vertical cohomological operator for $(\ref{linear-def})$ and obtain by Proposition \ref{cohomo} the following estimate~:
\begin{align}
			\left\|[\phi^v]_m	\right\|_{\epsilon_{m},R, r_{m}}&\leq \frac{ C_1}{\eta^{\tau+\nu}} 2^{m(\tau+\nu)} \left[ E_m g(t)+ \frac{C }{C''^\nu} \left(\max_{m_1+m_2=m}\eta_{m_1}E_{m_2}\right)\right.\nonumber\\
	&\left.\times (A(t)+\sum_{j,\pm} B^{\pm e_j}(t)) \left(\left(\frac{1}{1-C' g(t)/C''}\right)^{n-a-b}-1\right)\right]_m.\label{induction}
\end{align}
Note that we use that $\hat\tau_i\hat\tau_j=\hat\tau_j\hat\tau_i$ to apply Proposition \ref{cohomo} to $L_i^v(\phi^v)$.
Using definition $(\ref{def-eta})$, we obtain
\begin{align*}\nonumber
		\left\|[\phi^v]_m 	\right\|_{\epsilon_{m},R, r_{m}}\leq \eta_m \left[  g(t)+ \frac{C }{C''^\nu}  (A(t)+\sum_{j,\pm} B^{\pm e_j}(t))\right.\times&\\
		 \left.\left(\left(\frac{
		1}{1-C' g(t)/C''}\right)^{n-a-b}-1\right)\right]_m.&
\end{align*}

For each $1\leq i\leq q$, we consider the single inverse vertical cohomological equation (\ref{linear-}). By Proposition \ref{str-cohomo} and Remark \ref{i-domain}, we obtain ,
\begin{align*}\nonumber
	\left\|[\phi^v]_m 	\right\|_{\hat\tau_i^2(\Omega_{\epsilon_{m},R, r_{m}})}\leq \eta_m \left[  g^{+e_i}(t)+ \frac{C }{C''^\nu}  (A(t)+\sum_{j,\pm} B^{\pm e_j}(t))\right.\times&\\
	\left.\left(\left(\frac{1}{1-C' g^{+e_i}(t)/C''}\right)^{n-a-b}-1\right)\right]_m.&
	\end{align*}
By (\ref{estim-i+2}), we also obtain the same estimate for $	\left\|[\phi^v]_m 	\right\|_{\hat\tau_i(\Omega_{\epsilon_{m}, R,r_{m}})}$.
Similarly, considering, for each $1\leq i\leq q$, the single vertical cohomological equation (\ref{linear+}), by Proposition  \ref{str-cohomo} and Remark \ref{i-domain}, we also have
\begin{align*}\nonumber
		\left\|[\phi^v]_m	\right\|_{\hat\tau_i^{-2}(\Omega_{\epsilon_{m},R, r_{1,m}})}\leq \eta_m \left[  g^{-e_i}(t)+ \frac{C }{C''^\nu}  (A(t)+\sum_{j,\pm} B^{\pm e_j}(t)) \right.&\\
		\left.\left(\left(\frac{
		1}{1-C' g^{-e_i}(t)/C''}\right)^{n-a-b}-1\right)\right]_m.&
\end{align*}
By (\ref{estim+i-2}), we obtain the same estimate for $	\left\|[\phi^v]_m 	\right\|_{\hat\tau_i^{-1}(\Omega_{\epsilon_{m},R, r_{m}})}$.

Let us consider the functional equation system, $i=1,\ldots, q$,
\begin{equation}\label{equ-A}\nonumber
	A(t)=   G(t,A(t))+ \frac{C }{C''^\nu}  (A(t)+\sum_{j,\pm} B^{\pm e_j}(t)) \left(\left(\frac{
		1}{1-C' G(t,A(t))/C''}\right)^{n-a-b}-1\right),
\end{equation}
\begin{equation}\label{equ-A'}\nonumber
	B^{\pm e_i}(t)=   G(t,B^{\pm e_i}(t))+ \frac{C }{C''^\nu}  (A(t)+\sum_{j,\pm} B^{\pm e_j}(t)) \left(\left(\frac{
		1}{1-C' G(t,B^{\pm e_i}(t))/C''}\right)^{n-a-b}-1\right).
\end{equation}
This equation system has a unique analytic solution vanishing at the origin at order $2$ as shown by the implicit function theorem.
Notice that the coefficients of the powers of $A(t)$,  $B^{\pm e_j}(t)$ are non-negative. As $A_2=B_2^{\pm e_i}=[G(t,0)]_2> 0$, we obtain by induction that all coefficients of degree $m\geq 2$ of  $A(t)$ and  $B^{\pm e_j}(t)$ are non-negative.



We now can prove the theorem. Indeed by assumption, there are positive constants $M'',L$ such that $\eta_m\leq M''L^m$ for all $m\geq 2$. Since $A(t)$ converges at the origin, then $A_m\leq D^m$ for some positive $D$. By the majorant construction and previous estimates, we have $\left\|[\phi^v]_m 	\right\|_{\epsilon_{m},R, r_{m}}\leq A_m$ for all $m\geq 2$. Hence, according to (\ref{minorants}), we have $$||[\phi^v]_m||_{\frac{\epsilon_1}{2},R,r_1e^{-1}}\leq M''(DL)^m$$ for all $m\geq 2$. Hence, $\phi^v$ converges near the Lie group.

This proves the theorem.
\subsection{Full linearization for embedded toroidal manifolds}
\begin{mythm}\label{full-toroidal}
	Let $C$ be an $n$-dimensional complex abelian group, holomorphically embedded into a complex manifold $M_{n+d}$.   Assume that $T_{M|_C}$ splits. Assume the normal bundle $N_C$ has (locally constant) unitary transition functions. Assume that  $N_C$ is fully Diophantine (see Definition (\ref{dioph_full})). Then $(M,C)$ is biholomorphic to $(N_{C/M},C)$ as germ.
\end{mythm}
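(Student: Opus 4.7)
The plan is to adapt the proof of Theorem \ref{ueda-thm}, now seeking a biholomorphism $\Phi=\mathrm{Id}+\phi$ with a full perturbation $\phi=(\phi^h,\phi^v)$ (rather than a purely vertical one), that conjugates the \emph{linear} deck transformations $\hat\tau_i$ of $\widetilde{N_C}$ from (\ref{tauhat}) to the actual deck transformations $\tau_i$ of $(M,C)$:
$$\Phi\circ\hat\tau_i=\tau_i\circ\Phi,\quad i=1,\ldots,q.$$
By the classification of neighborhoods recalled after (\ref{tauhat}), such a $\Phi$ descends to the desired biholomorphism between germs of neighborhoods of $C$ in $N_C$ and in $M$.

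Expanding the conjugation equation as in (\ref{horizontal})--(\ref{vertical}), but no longer forcing $\phi^h=0$, one obtains the fully coupled cohomological system
\begin{align*}
L_i^h(\phi^h)(x,y,h,v) &= \tau_i^{*,h}(x,y,h+\phi^h(x,y,h,v),v+\phi^v(x,y,h,v)),\\
L_i^v(\phi^v)(x,y,h,v) &= \tau_i^{*,v}(x,y,h+\phi^h(x,y,h,v),v+\phi^v(x,y,h,v)),
\end{align*}
i.e.\ $L_i(\phi)=\tau_i^*\circ(\mathrm{Id}+\phi)$ with $L_i=(L_i^h,L_i^v)$ the full cohomological operator of Definition \ref{def-cohom-op}. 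The commutativity $\tau_i\tau_j=\tau_j\tau_i$ ensures the compatibility conditions (\ref{almost-com})--(\ref{almost-com2}) of Proposition \ref{cohomo} hold order by order, and under the fully Diophantine hypothesis (\ref{dh_full})--(\ref{dv_full}) both families of small divisors $\lambda_l^P\mu_l^Q-\lambda_{l,i}$ and $\lambda_l^P\mu_l^Q-\mu_{l,j}$ are simultaneously controlled, so Proposition \ref{cohomo} yields a unique formal solution at every homogeneous degree $m\geq 2$ in the Taylor expansion $\phi=\sum_{|Q|\geq 2}\phi_Q(x,y,h)v^Q$.

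The solution is built by induction on $m\geq 2$ exactly as for Theorem \ref{ueda-thm}: the degree-$m$ equation reduces to $L_i[\phi]_m=P_i(x,y,h;v,[\phi]_2,\ldots,[\phi]_{m-1})$ with a right-hand side polynomial in the lower-order jets, which is inverted by Proposition \ref{cohomo} on the shrunk domains $\Omega_{\epsilon_m,R,r_m}$, while Proposition \ref{str-cohomo} combined with Remark \ref{i-domain} supply the extended estimates on the translated domains $\hat\tau_i^{\pm 1,\pm 2}(\Omega_{\epsilon_m,R,r_m})$ needed to Cauchy-estimate the compositions $\phi^h(\hat\tau_i\cdots)$ and $\phi^v(\hat\tau_i\cdots)$ entering the right-hand side at step $m+1$.

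The main obstacle, as for any such KAM-type statement, is the convergence of the resulting formal series. I would control it by replaying the majorant-series construction of Theorem \ref{ueda-thm}: introduce auxiliary series $A(t)$ and $B^{\pm e_i}(t)$ bounding $\|[\phi]_m\|$ on $\Omega_{\epsilon_m,R,r_m}$ and on its $\hat\tau_i^{\pm 1,\pm 2}$-translates respectively, derive a functional equation system analogous to (\ref{equ-A})--(\ref{equ-A'}) that now also accounts for $\phi^h$ (the structure is unchanged, only an additional block involving the Cauchy-estimate derivatives of $\phi^h$ appears), and solve it by the implicit function theorem for a holomorphic majorant vanishing quadratically at the origin. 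Combined with the geometric growth of $\eta_m$ from (\ref{def-eta}) and the uniform lower bounds (\ref{minorants}), this yields convergence of $\phi$ on a fixed neighborhood of $\tilde{C}$ in $\widetilde{N_C}$; the equivariance $\Phi\circ\hat\tau_i=\tau_i\circ\Phi$ then ensures descent to the claimed biholomorphism $(N_C,C)\simeq(M,C)$.
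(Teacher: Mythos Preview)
Your overall strategy is correct and matches the paper's: set up the conjugacy $\Phi\circ\hat\tau_i=\tau_i\circ\Phi$, derive the full cohomological equation $L_i(\phi)=\tau_i^*\circ(\mathrm{Id}+\phi)$, solve degree by degree using Proposition \ref{cohomo} under the fully Diophantine hypothesis, and prove convergence by a majorant series argument.

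However, you overcomplicate the convergence step by transplanting the entire machinery of Theorem \ref{ueda-thm}. You claim that ``compositions $\phi^h(\hat\tau_i\cdots)$ and $\phi^v(\hat\tau_i\cdots)$'' appear in the right-hand side at step $m+1$, necessitating Proposition \ref{str-cohomo}, Remark \ref{i-domain}, and the auxiliary majorants $B^{\pm e_i}(t)$. This is not so. In the Ueda problem the conjugacy target $\tilde\tau_i$ has a nonlinear horizontal part, so $\Phi\circ\tilde\tau_i$ produces the term (II) of (\ref{(II)}) involving $\phi^v$ evaluated at translated points. Here the target $\hat\tau_i$ is \emph{linear}, so the right-hand side is simply $\tau_i^*(x,y,h+\phi^h,v+\phi^v)$: the only occurrences of $\phi$ are as arguments to the \emph{fixed} functions $\tau_i^*$, never as $\phi\circ\hat\tau_i$. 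Taylor-expanding in $\phi^h$ produces derivatives $\partial_h^P\tau_{i,Q}^*$ of these fixed data, which are estimated once and for all by Cauchy on the initial domain (this is the paper's estimate (\ref{estimate_M})). Consequently the paper needs only a \emph{single} majorant series $A(t)$ satisfying the much simpler functional equation $A(t)=g(t)h(t)$, with no $B^{\pm e_i}$ and no recourse to the convex-hull extension lemmas. Your proposed route would still close, but it obscures the fact that full linearization is analytically \emph{easier} than vertical linearization, not harder.
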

\begin{proof}
If we can find $\Phi= \mathrm{Id}+\phi$ be a biholomorphism of  $\Omega_{\epsilon_0,r}$ (to be chosen) such that for any $1 \leq i \leq q$,
\begin{equation}
\Phi \circ \hat \tau_i =\tau_i \circ \Phi \label{Phi}
\end{equation}
for some biholomorphism of $\Omega_{\epsilon_0,r}$ (to be chosen with $r$ decreasing)
$$ \hat \tau_i(x,y,h,v)=( x,y,T_i h, M_iv)$$
such that $(M,C)$ is biholomorphic to the quotient of $\Omega_{\epsilon_0,r}$ by  $\tau_1,\dots, \tau_q$.
In that case, $(M,C)$ is biholomorphic to the quotient of $\Omega_{\epsilon_0,r}$ by  $\hat \tau_1,\dots, \hat\tau_q$ which is biholomorphic to $(N_{C/M},C)$.

Define the higher order perturbations
$$ \tau_{i,\pm}^{*,h}:= \tau_{i, \pm}^{h}-T_i^{\pm 1},\tau_{i, \pm}^{*,v}:= \tau_{i, \pm}^{v}-M_i^{\pm 1}.$$
The horizontal part of equation $(\ref{Phi})$ is given by
\begin{equation}
T_i^{\pm 1} h+ \phi^h(x,y,T_i^{\pm 1} h,M_i^{\pm 1} v)=T_i^{\pm 1} h+T_i^{\pm 1} \phi^h+\tau_{i, \pm}^{*,h}(x,y,h+\phi^{h}(x,y,h,v),v+\phi^{v}(x,y,h,v)),
\end{equation}
that is
\begin{equation}\label{horizontal_full}
\phi^h(x,y,T_i^{\pm 1} h,M_i^{\pm 1} v)=T_i^{\pm 1} \phi^h+\tau_{i, \pm}^{*,h}(x,y,h+\phi^{h}(x,y,h,v),v+\phi^{v}(x,y,h,v)),
\end{equation}
The vertical part of equation $(\ref{Phi})$ is given by
\begin{equation}
M_i^{\pm 1} v+ \phi^v(x,y,T_i^{\pm 1} h,M_i^{\pm 1} v)=M_i^{\pm 1} v+M_i^{\pm 1} \phi^v+\tau_{i, \pm}^{*,v}(x,y,h+\phi^{h}(x,y,h,v),v+\phi^{v}(x,y,h,v)),
\end{equation}
that is
\begin{equation}\label{vertical_full}
 \phi^v(x,y,T_i^{\pm 1} h,M_i^{\pm 1} v)=M_i^{\pm 1} \phi^v+\tau_{i, \pm}^{*,v}(x,y,h+\phi^{h}(x,y,h,v),v+\phi^{v}(x,y,h,v)).
\end{equation}
The cohomological operator is defined as
in $(\ref{linear-def-total})$.
In particular, we need to solve the cohomological operator
\begin{equation}
\label{linear_full}
 L_{i, \pm}(\phi)=\tau_{i, \pm}^{*}(x,y,h+\phi^{h}(x,y,h,v),v+\phi^{v}(x,y,h,v)).
\end{equation}

Note that there exists a formal (power series in $v$, with holomorphic coefficients in some $\Omega_{\epsilon',r}$) solution to both (\ref{horizontal_full}) and (\ref{vertical_full}).
More precisely, as in the previous theorem,
we proceed by induction on $m\geq 2$ as
	Taylor expansion at $v=0$ of (\ref{linear_full}) shows that for any $m \geq 2$,
	\begin{equation}\label{degm}L_{i, \pm} [\phi]_m=P_i(x,y,h; v, [\phi]_2, \cdots, [\phi]_{m-1})\end{equation}
	where $P_i(x,y,h; v, [\phi]_2, \cdots, [\phi]_{m-1})$ is analytic in $x,y,h$ and polynomial in $v$, $[\phi]_2$, ..., $[\phi]_{m-1}$.
As in the previous theorem \ref{ueda-thm}, we will estimate the $L^\infty$ norm to show that this formal solution is in fact convergent.
To do so, we will follow the majorant method in \cite[Section 4]{GS21}.

In the following, we will estimate $[\phi^v]_k(k \geq 2)$ and $[\phi^h]_k(k \geq 2)$ by induction on $k$ (which gives the estimate for $\phi^v, \phi^h$).
Compared to the previous theorem, we only need to study the cohomological operator.

Let $0<r_1<\min(1,r)$ and $0<\epsilon_1<\epsilon_0$ be positive constants to be chosen below sufficiently small. Let us define the sequences  $r_{m+1}=r_me^{-\frac{1}{2^{m}}}$ and $\epsilon_{m+1}=\epsilon_m- \epsilon_1\frac{\eta}{ 2^{m}\kappa}$ for $m>0$ and some $\eta<\frac{\kappa}{2}$ sufficiently small. We have  $r_{m+1}:=r_1e^{-\sum_{k=1}^m\frac{1}{2^k}}$ and $\epsilon_{m+1}:= \epsilon_1-\epsilon_1\sum_{k=1}^m\frac{\eta}{2^k  \kappa}$ for $m\geq 1$.

Our goal to find germs of holomorphic function at 0
$$A(t)=\sum_{k \geq 2} A_k t^k,$$
such that
for any fixed $R>0$
\begin{equation}\label{goal_full}
\sup_{(x,y,h,v) \in\cup_{i=0}^q \hat\tau_i^{\pm 1}(\Omega_{\epsilon_k,R, r_k}) }| [\phi]_k(x,y,h,v)|  \leq A_k \eta_k,
\end{equation}
for suitable chosen $ r_1$.
Here $A_k, \eta_k$ depends on $R$.
 Here, the sequence $\{\eta_m\}_{m\geq 1}$ is defined as previous.
 We have
\begin{equation}
\eta_m
\leq D^m,
\end{equation}
for some positive constant $D$ as before.

Consider the Taylor development
$$\phi(x,y,h,v)=\sum_{Q \in \N^d, |Q| \geq 2} \phi_Q(x,y,h)v^Q.$$
Let $[\phi]_k$ be the homogeneous degree $k$ part of $\phi$
$$[\phi]_k(x,y,h,v)=\sum_{Q \in \N^d, |Q| =k} \phi_Q(x,y,h)v^Q.$$
In the following, we will always denote $[\bullet]_k$ to indicate the homogeneous degree $k$ part of some series in $v$.
Notice that
$$[L_{\pm i}(\phi)]_k=L_{\pm i} [\phi]_k.$$
The degree 2 part  is
$$L_i([\phi]_2)= [\tau_{i,+}^{*}(x,y,h,v)]_2,$$
whose right-handed-side term is independent of $\phi$.

According to the following equation
\begin{equation}\label{compat-full}
	L_i([\tau_{j,+}^{*}]_{m})= L_j([\tau_{i,+}^{*}]_{m})
	\end{equation}
and Proposition \ref{cohomo}, these two sets of equations have the same unique solution $[\phi]_2$ on $\Omega_{ \epsilon_1,r_1}$.

We proceed by induction on $m\geq 2$ as
	Taylor expansion at $v=0$ of (\ref{horizontal_full}) and (\ref{vertical_full}) shows that for any $m \geq 2$,
	\begin{equation}
	L_i [\phi]_m=P_i(x,y,h; v, [\phi]_2, \cdots, [\phi]_{m-1})\end{equation}
	where $P_i(x,y,h; v, [\phi]_2, \cdots, [\phi]_{m-1})$ is analytic in $x,y,h$ and polynomial in $v$, $[\phi]_2$, ..., $[\phi]_{m-1}$.
We further expand the first expression on the right-hand side as
\begin{eqnarray*}
	L_i [\phi]_m &=&
	\sum_{\dindice{P\in \Bbb N_1^{n-a-b},Q \in \N_2^d}{m_1+m_2=m}}\frac{1}{P!}\partial_h^P \tau_{i,Q}^{*}(x,y,h)\left[ \left(\phi^h\right)^P\right ]_{m_1}\left[\left(v+\phi^v\right)^Q\right]_{m_2}.
\end{eqnarray*}
where
$$\tau_{i}^{*}(x,y,h,v)=\sum_{Q \in \N_2^d} \tau_{i,Q}^{*}(x,y,h)v^Q.$$

By Cauchy estimate $(\ref{Reps})$ applying to $\tau_{i,j}^*(1 \leq j \leq n-a-b+d)$ implies that, if $\epsilon_1$ small enough, there exists $R'>0$, $M>0$ (depending on $R$) 
such that
$$
||\tau_{i,Q,j}^*||_{\epsilon_1,R,r} \leq R'^{|Q|},
$$
for any $P \in \Bbb N_1^{n-a-b}$
\begin{equation}\label{estimate_M}
 ||\partial_h^P\tau_{i,Q,j}^*||_{\epsilon_1,R,r} \leq \frac{R'^{|Q|}}{M^{|P|}},
\end{equation}
with
$$\tau_{i,j}^*=\sum_{Q \in \N^d} \tau^*_{i,Q,j} (x,y,h)v^Q$$
and $\tau_i^*=(\tau_{i,1}, \cdots, \tau_{i,n-a-b+d})$.
Note that the functions $\tau_{i,j}^*$ are independent of the induction process.
For $\epsilon_1$ small enough (independent of the induction process), we may assume that the distance between $\Omega_{\epsilon_1, R,r(R)}$ and the boundary of definition domain of $\tau_{i,j}^*$ has a uniform lower bound which induces the constant $M$ in the estimate $(\ref{estimate_M})$.

Assuming by induction that (\ref{goal_full}) holds for all $m'<m$, we have
\begin{equation}
 \left\|	\left[\left(v+\phi^v\right)^Q\right]_{m_2}\right\|_{\epsilon_{m-1},R,r_{m-1}}\leq E_{m_2}[(t+A(t))^{|Q|}]_{m_2}
\end{equation}
where we have set
	\begin{align*}
		\label{g(t)}
		&\eta_{Q,m}:=
		\max
		_{M'\in E_{Q,m}}\left(\prod_{i=1}^d\eta_{m_{i,1}}\cdots \eta_{m_{i,q_i}}\right),\quad E_m:=\max_{\dindice{Q\in \Bbb N^d}{2\leq |Q|\leq m}} \eta_{Q,m},
			\end{align*}

A similar calculation gives
\begin{equation}
 \left\|	\left[\left(\phi^h\right)^P\right]_{m_1}\right\|_{\epsilon_{m-1},R,r_{m-1}}\leq E_{m_1}[A(t)^{|P|}]_{m_1}.
\end{equation}

As a consequence, we have
\begin{align}
	||L_i [\phi]_m||_{\epsilon_{m-1}, R,r_{m-1}}\leq &\sum_{m_1+m_2=m}\sum_{\dindice{P\in \Bbb N_1^{n-a-b}}{Q\in \Bbb N_2^d}}
	\frac{(R')^{|Q|}}{M^{|P|}}E_{m_1}[A(t)^{|P|}]_{m_1}E_{m_2}[(t+A(t))^{|Q|}]_{m_2}\nonumber\\
	\leq & \sum_{m_1+m_2=m}
	E_{m_1}\left[h(t)\right]_{m_1}E_{m_2}\left[g(t)\right]_{m_2}\nonumber\\
	\leq &E_m [h(t)g(t)]_m.
\end{align}
where we have set
\begin{align}
g_m(t):= \sum_{|Q|=2}^{m}(R')^{|Q|}(t+J^{m-1}(A(t))^{|Q|},\quad   g(t):= \sum_{|Q|\geq 2}(R')^{|Q|} (t+A(t))^{|Q|}.
\end{align}
\begin{align}
h_m(t):= \sum_{|P|=2}^{m}\frac{1}{M^{|P|}}(J^{m-1}(A(t))^{|P|},\quad   h(t):= \sum_{|P|\geq 2}\frac{1}{M^{|P|}} A(t)^{|P|}.
\end{align}
			Here $[g(t)]_m$,$[h(t)]_m$ denotes the coefficient of $t^m$ in the power series $g(t)$, $h(t)$.

The rest of the proof is completely as the previous proof.
Let us consider the functional equation system, $i=1,\ldots, q$,
\begin{equation}\label{equ-full}
	A(t)=   g(t)h(t).
\end{equation}
This equation system has a unique analytic solution vanishing at the origin at order $2$ as shown by the implicit function theorem.
Notice that the coefficients of the powers of $A(t)$ are non-negative by induction.



We now can prove the theorem. Indeed by assumption, there are positive constants $M'',L$ such that $\eta_m\leq M''L^m$ for all $m\geq 2$. Since $A(t)$ converges at the origin, then $A_m\leq D^m$ for some positive $D$. By the majorant construction and previous estimates, we have $\left\|[\phi]_m 	\right\|_{\epsilon_{m},R, r_{m}}\leq A_m$ for all $m\geq 2$. Hence, we have $$||[\phi]_m||_{\frac{\epsilon_1}{2},R,r_1e^{-1}}\leq M''(DL)^m$$ for all $m\geq 2$. Hence, $\phi$ converges near the Lie group.

This proves the theorem.
\end{proof}
\section{Neighborhoods of embedded Hopf manifolds}

In this section we consider, Hopf manifolds embedded as hypersurfaces into another complex manifold. 
Our aim is to give sufficient condition ensuring that such an embedded Hopf manifold has a neighborhood  biholomorphic to a neighborhood of the zero section into its normal bundle. In the article \cite{Tsu84}, H. Tsuji considered this "full linearization problem" for a Hopf surface embedded as a hypersurface when the normal bundle is a flat line bundle over the Hopf surface.

Let us recall some basic results of Hopf manifolds following \cite{Has93}.
The generalized definition above is due to Kodaira \cite{Kod66}.
\begin{mydef}\label{def-hopf}
	A Hopf manifold is a compact complex manifold of which the universal covering
	is $\C^n\setminus \{0\}$, where $n$ is a positive integer $(n \geq 2)$.
	A primary Hopf manifold is a compact complex manifold of
	which the covering transformation group is an infinite cyclic group.
	It is called of linear type if the covering transformation group is generated
	by a linear contraction of Jordan form.
	It is called of diagonal type if the covering transformation group is generated
	by a diagonalizable linear contraction.
\end{mydef}
It is well known that a Hopf manifold is the first example of a non-K\"ahler compact
complex manifold, that is, it does not admit a K\"ahler structure compatible with its
complex structure violating the topology constraint.
A primary Hopf manifold is a complex structure on $S^1 \times S^{2n-1}$.

\begin{mydef}\label{def-generic}
	A Hopf manifold $C$ is called of generic type if it is generated by a
	contraction of the type $\varphi: (z_1
	,\cdots,z_n) \to (\alpha_1 z_1, \cdots, \alpha_n z_n)$ with $0<|\alpha_1 |< \cdots
	< |\alpha_n | <1$, and there are no relations except trivial ones between the $\alpha_i$ of the
	form
	$$ \prod_{i \in A} \alpha_i^{r_i} = \prod_{j \in \{1, \cdots, n\} \setminus A} \alpha_j^{r_j}, r_i \in \N
	$$
	for any $A \subset \{1, \cdots, n\}$.\\
	Let $\Delta_{\alpha_1, \cdots, \alpha_n}$ be the free commutative subgroup of $\C^*$ generated by the eigenvalues
	$\alpha_1, \cdots, \alpha_n$.
\end{mydef}
Note that the cardinality of $\Delta_{\alpha_1, \cdots, \alpha_n}$
is at most countable.
Note that a Hopf manifold of linear type which can be deform to a Hopf manifold of generic type is necessarily of generic type.

Now we can state the main result on the full linearization of embedded Hopf manifolds.
\begin{mythm}
	\label{hopf}
	Let $C$ be a generic type Hopf manifold embedded as hypersurface in some complex manifold $X$.
	Assume that $N_{C/X}$ is a non-Hermitian-flat (flat) line bundle over $C$ corresponding to $\beta \in \C^*$. 
	Assume that $\beta^{ m}  \notin  \Delta_{\alpha_1, \cdots, \alpha_n}$ for any $m \geq 1$.
	Then $(C,X)$ and $(C, N_{C/X})$ are holomorphically equivalent.
\end{mythm}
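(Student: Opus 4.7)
The plan is to reduce the statement to a germ linearization problem at the origin of $\C^{n+1}$ and solve it by the majorant method of \cite{GS21}, in close analogy with Theorems \ref{ueda-thm} and \ref{full-toroidal}. The universal cover of the primary Hopf manifold $C$ is $\tilde C = \C^n \setminus \{0\}$, with cyclic deck transformation generated by $\varphi(z) = \mathrm{diag}(\alpha_1, \ldots, \alpha_n) z$. Since $\tilde C$ is simply connected, the pullback of the flat line bundle $N_{C/X}$ is trivial, and the generator of the deck transformation acts on the total space linearly by $\hat\tau(z, v) = (\alpha z, \beta v)$. Proceeding as in the toroidal case (Section 2), the germ of $X$ along $C$ is identified with the quotient, by a single deck transformation $\tau$, of a neighborhood of $\tilde C \times \{0\}$ in $\tilde C \times \C$, where $\tau$ has linear part $\hat\tau$ along the zero section. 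Because $n \geq 2$, Hartogs' extension theorem extends this neighborhood and $\tau$ across $\{z = 0\}$, yielding a germ of biholomorphism of $(\C^{n+1}, 0)$ with linear part $\hat\tau = \mathrm{diag}(\alpha_1, \ldots, \alpha_n, \beta)$.

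One then seeks $\Phi = \mathrm{id} + \phi$, with $\phi$ vanishing to order two along $\{v = 0\}$, solving the conjugation equation $\Phi \circ \tau = \hat\tau \circ \Phi$; any such $\Phi$ descends to the desired biholomorphism of germs $(X, C) \cong (N_{C/X}, C)$. Writing $\phi = \sum_{P, k} \phi_{P, k} z^P v^k$ and equating coefficients produces, at each $(P, k)$ with $|P| + k \geq 2$, horizontal and vertical cohomological equations with divisors $\alpha^P \beta^k - \alpha_i$ and $\alpha^P \beta^k - \beta$. A short case analysis using the hypotheses shows these divisors never vanish: a relation $\alpha^P \beta^k = \alpha_i$ with $k \geq 1$ rewrites as $\beta^k = \alpha^{e_i - P} \in \Delta_{\alpha_1, \ldots, \alpha_n}$, contradicting $\beta^m \notin \Delta_{\alpha_1, \ldots, \alpha_n}$ for $m \geq 1$; with $k = 0$ it becomes a non-trivial multiplicative relation among the $\alpha_i$'s, excluded by the generic type assumption; the analogous analysis handles $\alpha^P \beta^k = \beta$. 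Hence the formal series $\phi$ is uniquely determined.

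For convergence, we invoke the majorant estimates of \cite{GS21}. When $|\beta| < 1$, $\hat\tau$ is a linear contraction of $\C^{n+1}$ and we are in the Poincar\'e domain: the small divisors are uniformly bounded below for $|P| + k$ large, so Poincar\'e's linearization theorem yields convergence directly. When $|\beta| > 1$, we are in a Siegel-like regime and must extract quantitative lower bounds on $|\alpha^P \beta^k - \alpha_i|$ and $|\alpha^P \beta^k - \beta|$ from the discreteness of $\Delta_{\alpha_1, \ldots, \alpha_n}$ (a consequence of the generic type assumption together with $|\alpha_i| < 1$) and from the fact that powers of $\beta$ avoid $\Delta_{\alpha_1, \ldots, \alpha_n}$, before feeding them into the majorant argument of \cite{GS21}. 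The main obstacle is precisely this extraction of effective Diophantine-type estimates from the qualitative non-resonance hypothesis; the geometric decay of $|\alpha^P|$ as $|P| \to \infty$, enforced by $|\alpha_i| < 1$, is the key ingredient that makes it possible.
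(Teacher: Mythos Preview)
Your reduction to a single germ at the origin of $\C^{n+1}$ is an attractive shortcut, and in the case $|\beta|<1$ it does work: the linear part $\mathrm{diag}(\alpha_1,\dots,\alpha_n,\beta)$ is then a contraction, you are in the Poincar\'e domain, non-resonance follows from the hypotheses, and Poincar\'e's theorem gives convergence with no small-divisor analysis. But the case $|\beta|>1$ (which is allowed, since non-Hermitian-flat only means $|\beta|\neq 1$) is a genuine gap. With $|\alpha_i|<1$ and $|\beta|>1$ you are in a Siegel-type situation: for any target value $c\in\{|\alpha_j|,|\beta|\}$ the equation $\sum_i p_i\log|\alpha_i|+k\log|\beta|=\log c$ has infinitely many near-solutions in $\N^{n}\times\N$, and the growth of $|\beta|^k$ exactly cancels the decay of $|\alpha^P|$ you invoke. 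Nothing in the hypotheses prevents the divisors $|\alpha^P\beta^k-\alpha_j|$ from being Liouville-small along such sequences; the condition $\beta^m\notin\Delta_{\alpha_1,\dots,\alpha_n}$ is purely qualitative and the generic-type assumption constrains only the $\alpha_i$'s among themselves. So you cannot ``extract quantitative lower bounds'' from the stated hypotheses, and the majorant argument of \cite{GS21} does not close.

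The paper takes a genuinely different route that sidesteps small divisors entirely. It stays on the compact manifold $C$, works in \v{C}ech cohomology with values in $T_X|_C\otimes N_C^{-m}$, and the crucial uniform bound on the inverse of the coboundary operator (the constant $K$ independent of $m$ in Proposition~\ref{tsu_lem}) is obtained by a compactness argument \`a la Tsuji: the vector fields $Z_i$ and Arzel\`a--Ascoli produce a subsequential limit, and the non-Hermitian-flatness of $N_C$ (encoded as $|l_{N_{A-1}N_A}|<1$ for some transition function) forces that limit to vanish. The non-resonance hypothesis $\beta^m\notin\Delta$ enters only to guarantee $H^1(C,T_X|_C\otimes N_C^{-m})=0$ via Mall's computation, i.e.\ existence of a solution at each level, not its size. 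In short, your lift to the universal cover discards the compactness of $C$, which is precisely the ingredient the paper exploits to avoid needing a Diophantine condition.
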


\subsection{Properties of Hopf manifolds}
In \cite{Tsu84}, Tsuji considered the full linearization problem in a neighborhood of a Hopf surface embedded as a hypersurface when the normal bundle is a flat line bundle over the Hopf surface.
One of the key point in \cite[Lemma 2.33]{Tsu84} is that line bundles he considered as normal bundles are not Hermitian flat. These line bundles may exist only if the manifold is not compact K\"ahler as shown in the following classical result~:
\begin{mylem}
	Let $L$ be a flat line bundle over a compact K\"ahler manifold $X$.
	Then $L$ is Hermitian flat.
\end{mylem}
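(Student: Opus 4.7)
The plan is to reduce the statement to the classical $\partial\overline{\partial}$-lemma on compact Kähler manifolds. By assumption $L$ has locally constant $\C^*$-valued transition functions, so the trivial connection in each such local frame glues to a globally defined flat connection on $L$. Its curvature is zero, so Chern--Weil gives $c_1(L)=0$ in $H^2(X,\R)$. The goal is then to produce a smooth Hermitian metric $h$ on $L$ whose Chern connection is flat; once this is done, the Chern connection has $U(1)$-valued holonomy, so in local parallel frames the transition functions take values in $U(1)$, which is exactly what it means for $L$ to be Hermitian flat.

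To build such an $h$, start from any smooth Hermitian metric $h_0$ on $L$ and let $\Theta(h_0) = -\partial\overline{\partial}\log h_0$ be its Chern curvature: a closed $(1,1)$-form whose de Rham class is a nonzero constant multiple of $c_1(L)\in H^2(X,\R)$. Since $c_1(L)=0$, the form $\Theta(h_0)$ is $d$-exact. At this point the compact Kähler hypothesis enters via the global $\partial\overline{\partial}$-lemma: every $d$-exact real $(1,1)$-form on $X$ can be written as $\sqrt{-1}\,\partial\overline{\partial}\psi$ for some smooth real function $\psi$. A rescaling of the form $h := e^{\pm\psi}h_0$ (sign chosen so that the correction cancels $\Theta(h_0)$) then gives a Hermitian metric on $L$ with vanishing Chern curvature, which is the desired flat Hermitian metric.

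There is no genuine obstacle beyond the $\partial\overline{\partial}$-lemma itself, which is a standard consequence of Hodge theory on compact Kähler manifolds. The point of spelling the argument out is to isolate exactly where the Kähler assumption is used; its failure in the non-Kähler setting is precisely why non-Hermitian-flat flat line bundles can occur over Hopf manifolds, motivating the separate treatment needed for Theorem \ref{hopf}.
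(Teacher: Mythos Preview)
Your proof is correct and follows essentially the same route as the paper: both use that $c_1(L)=0$ in de Rham cohomology, invoke the $\partial\overline{\partial}$-lemma on the compact K\"ahler manifold to write the Chern curvature of an arbitrary metric $h_0$ as $\partial\overline{\partial}$ of a smooth function, and then correct $h_0$ by the corresponding exponential factor to obtain a Hermitian metric with vanishing curvature. Your added remarks on why $c_1(L)=0$ via Chern--Weil and on the $U(1)$-holonomy interpretation of Hermitian flatness are accurate elaborations but do not change the strategy.
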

\begin{proof}
	Let $h_0$ be an arbitrary Hermitian metric on $L$. Its Chern curvature is given by
	$$
	\Theta(h_0) = -\partial\bar{\partial}\log h_0.
	$$
	Since $L$ is flat, the first Chern class $c_1(L)$ vanishes in de Rham cohomology. By the $\partial\bar{\partial}$-lemma, the curvature form is therefore $\partial\bar{\partial}$-exact; that is, there exists a smooth function $\varphi$ on $X$ such that
	$$
	\Theta(h_0) = \partial\bar{\partial}\varphi.
	$$
	
	Define a new metric $h$ on $L$ by twisting $h_0$ with the potential $\varphi$:
	$$
	h = e^{-\varphi} h_0.
	$$
	Then, the Chern curvature of $h$ is computed as follows:
	$$
	\Theta(h) = -\partial\bar{\partial}\log h = -\partial\bar{\partial}(\log h_0 - \varphi)
	= -\partial\bar{\partial}\log h_0 + \partial\bar{\partial}\varphi.
	$$
	Substituting the expression for $\Theta(h_0)$ gives:
	$$
	\Theta(h) = \Theta(h_0) - \Theta(h_0) = 0.
	$$
	
	Thus, $h$ is a Hermitian metric on $L$ with trivial curvature, which shows that $L$ is Hermitian flat.
\end{proof}
Another property of Hopf surfaces that Tsuji used is the fact that a primary Hopf surface (see Definition \ref {def-hopf}) has many holomorphic tangent fields. Although in general, Poincaré–Hopf index theorem implies the existence of zeros of tangent fields, it is however enough to have many holomorphic tangent fields non vanishing over some measure 0 closed set. That's why, in Proposition \ref{Shilov}, we assume that the tangent fields do not vanish near the Shilov boundary.
\begin{mydef}[Shilov boundary]
	Let \(U\subset \mathbb{C}^n\) be a bounded open set and consider the uniform algebra
	\[
	A(U)=\{ f\in C^0(\overline{U}) \mid f \text{ is holomorphic on } U\}.
	\]
	The \emph{Shilov boundary} \(\Gamma_A\) of \(A(U)\) (which is contained in the boundary of $U$ with respect to the usual Euclidean topology) is defined as the unique smallest closed subset $\Gamma_A \subset \overline{U}$, such that for every \(f\in A(U)\),
	\[
	\max_{z\in \overline{U}} |f(z)| = \max_{z\in \Gamma_A} |f(z)|.
	\]
\end{mydef}
\begin{myex}
	\begin{itemize}
		\item If $U = B^n = \{ z\in \mathbb{C}^n \mid \|z\| < 1\}$, then 
		$\Gamma_A = \{ z\in \mathbb{C}^n \mid \|z\|=1\}$.
		\item If $U = \mathbb{D}^n = \{ (z_1,\dots,z_n) \in \mathbb{C}^n \mid |z_j|<1 \text{ for } j=1,\dots,n \}$, then $\Gamma_A = \{ (z_1,\dots,z_n) \in \overline{\mathbb{D}}^n \mid |z_1|=\cdots=|z_n|=1\}$.
		This is obtained by applying the maximum principle
			with respect to each complex variable.
Note that for any $w \in \partial \mathbb{D} $, any $f \in A(\mathbb{D}^n)$ and any $a \in \N$,
the restriction $f|_{\mathbb{D}^a \times \{w\} \times \mathbb{D}^{n-1-a}} \in A(\mathbb{D}^a \times \{w\} \times \mathbb{D}^{n-1-a})$
by Montel's theorem.
			Same arguments work for products of domains in $\C$.
	\end{itemize}
\end{myex}
We have the following proposition.
\begin{myprop}
	\label{Shilov}
	Let \(U\subset \mathbb{C}^n\) be a bounded domain.
	Consider holomorphic tangent fields $Z_1, \cdots, Z_n$ defined on some neighborhood of $\bar{U}$.
	Assume that $Z_1 \wedge \cdots \wedge Z_n$ is nowhere vanishing on the Shilov boundary $\Gamma_A$.
	Then there exists constant $C>0$ such that
	\begin{equation}
		\sum_{i=1}^n || \d_i f|| \leq C(\sum_{j=1}^n ||Z_j f||) \label{Z}
	\end{equation}
	for any $f \in A(U)$ holomorphic in some neighborhood of $\bar{U}$
	where $||f||$ means the $L^\infty$ norm of $f$ on $U$.
\end{myprop}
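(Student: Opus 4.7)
The plan is to reduce everything to an inequality on the Shilov boundary $\Gamma_A$ and then invoke the defining property of $\Gamma_A$ to pass back to the sup norm on $U$. Write the vector fields in coordinates as $Z_j = \sum_i a_{ij}(z)\,\partial_i$, where the functions $a_{ij}$ are holomorphic on some neighborhood $W$ of $\overline{U}$. The wedge product hypothesis says precisely that $\det(a_{ij}(z))\neq 0$ for every $z\in \Gamma_A$. Since $\Gamma_A$ is closed (by definition) and contained in the compact set $\overline{U}$, it is itself compact; hence by continuity of $\det(a_{ij})$ there is an open neighborhood $V$ of $\Gamma_A$ in $W$ on which $|\det(a_{ij})|$ is bounded below by a positive constant. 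On $V$ we may therefore invert the matrix $A(z)=(a_{ij}(z))$ pointwise with a uniformly bounded inverse $B(z)=(b_{ij}(z))$, yielding the identity $\partial_i = \sum_j b_{ij}(z) Z_j$ on $V$, with $\sup_{z\in V}|b_{ij}(z)|\le C'$ for some constant $C'$.

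Applied to $f$ (which I assume is holomorphic on an open neighborhood of $\overline U$ containing $V$), this gives the pointwise estimate
\[
|\partial_i f(z)| \le C' \sum_{j=1}^n |Z_j f(z)|, \qquad z\in V.
\]
In particular this inequality holds at every point of $\Gamma_A\subset V$. The next step is the crucial use of the Shilov boundary property. Both $\partial_i f$ and $Z_j f$ extend continuously to $\overline U$ and are holomorphic on $U$, so each belongs to $A(U)$. By the defining property of $\Gamma_A$,
\[
\|\partial_i f\|_{L^\infty(U)} \;=\; \max_{z\in \Gamma_A} |\partial_i f(z)|,
\]
while trivially $\max_{z\in\Gamma_A}|Z_j f(z)| \le \|Z_j f\|_{L^\infty(U)}$ by continuity on $\overline U$.

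Combining the pointwise inequality on $\Gamma_A$ with these two facts yields
\[
\|\partial_i f\|_{L^\infty(U)} = \max_{\Gamma_A}|\partial_i f| \le C' \max_{\Gamma_A}\sum_{j=1}^n |Z_j f| \le C' \sum_{j=1}^n \|Z_j f\|_{L^\infty(U)},
\]
and summing over $i=1,\dots,n$ gives the desired estimate with $C=nC'$. I do not expect a genuine obstacle; the only subtle point is to make sure that the inverse matrix $B$ is controlled on an \emph{open} neighborhood of $\Gamma_A$ (so that we really get a pointwise bound on $\Gamma_A$, not merely at isolated points), which follows from the compactness of $\Gamma_A$ and continuity of $\det A$. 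No assumption on the regularity of $\partial U$ is needed, which is important because the Shilov boundary of a general bounded domain may be much smaller than the topological boundary.
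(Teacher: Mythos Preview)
Your proof is correct and follows essentially the same approach as the paper's: write the $Z_j$ in terms of the $\partial_i$ via a coefficient matrix, use the nonvanishing of the determinant on the compact set $\Gamma_A$ to bound the inverse matrix, obtain a pointwise estimate on $\Gamma_A$, and then invoke the defining property of the Shilov boundary to pass to sup norms. Your write-up is in fact slightly more careful than the paper's in spelling out the compactness argument for the uniform bound on the inverse; the only cosmetic point is that the parenthetical assumption that the neighborhood of $\overline U$ on which $f$ is holomorphic contains $V$ is unnecessary, since you only use the pointwise identity on $\Gamma_A\subset\overline U$.
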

\begin{proof}
	Note that for any $i$, $\d_i f$ is also in $A(U)$.
	Write
	$$Z_i f= \sum_{j} g_{ij} \d_j f$$
	for some holomorphic functions $g_{ij}$ defined on some neighborhood of $\bar{U}$.
	By assumption, for any $z \in \Gamma_A$,
	$$C^{-1} \leq |det(g_{ij}(z))| \leq C$$
	for some $C>0$.
	Thus for any $i$, for some possible $C>0$,for any $z \in \Gamma_A$,
	$$|\d_i f|(z) \leq C(\sum_j |Z_j f|(z)).$$
	By definition of Shilov boundary,
	$$||\d_i f|| \leq C(\sum_j ||Z_j f||).$$
\end{proof}
We shall use the definition in a broader sense that the one given in \cite{GS21} as we do not require $U_j^r$ to be be biholomorphic to a polydisc (see Section \ref{nested-hopf}).
\begin{mydef}\cite[Definition A.1]{GS21}\label{nested-cov}
	Let $\{U_j^r\}$ be an open covering of a complex manifold $C$ for each $r\in[r_*,r^*]$. We say that the family of coverings $\{U_j^r\}$ is {\it nested}, if each connected component of $U_{k}^{\rho}\cap  U_{j}^{r_*}$ intersects $U_{k}^{r_*}\cap  U_{j}^{r_*}$ when $r_*\leq \rho\leq r^*$.
	In particular, $U_{k}^{r_*}\cap  U_{j}^{r_*}$ is non-empty if and only if $U_{k}^{\rho}\cap  U_{j}^{r_*}$ is non-empty.
\end{mydef}

Now we can state the variant of \cite[Lemma 2.33]{Tsu84}.
\begin{myprop}
	\label{tsu_lem}
	Let $C$ be a compact complex manifold of dimension $n$.
	Let $\cU^r = \{U_j^r \}$, $r_* \leq r < r^*$, be family of nested coverings of $C$. Let $F,L$ be holomorphic flat line bundles over $C$.
	We assume~:
	\begin{enumerate}
		\item $L$ is not Hermitian flat, the transition functions $l_{ij}$ of which satisfies~:
		Assume that for any $i_0$, there exist $A(i_0)\in \N$, $N_0=i_0, N_1, \cdots, N_{A(i_0)}$
		such that
		\begin{equation}\label{lb1}
			|l_{N_0 N_1}| \leq 1, |l_{N_1 N_2}| \leq 1, \cdots, |l_{N_{A(i_0)-2} N_{A(i_0)-1}}| \leq 1, |l_{N_{A(i_0)-1} N_{A(i_0)}}| < 1.
		\end{equation}
		\item for any $m \in \N^*$,$H^1(X, F+mL)=0.$
		\item there exists holomorphic vector fields $Z_1, \cdots, Z_n$
		such that $Z_1 \wedge \cdots \wedge Z_n$
		is nowhere vanishing on the Shilov boundary of any open set in the cover $\cU^r$.
	\end{enumerate}
	Then we have for any $r_*<r''<r'<r^*$, for some $K>0$ independent of $m$, for any $\gamma \in C^0(\cU^{r'}, F+mL)$
	\begin{equation}\label{hopft-cohom-bound}
		||\gamma||_{r''} \leq K ||\delta \gamma||_{r'}.
	\end{equation}
\end{myprop}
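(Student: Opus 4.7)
The proof follows \cite[Lemma 2.33]{Tsu84} adapted to the current hypotheses. The overall strategy is to combine the vanishing $H^1(X,F+mL)=0$, which supplies abstract solvability of cochain equations, with the quantitative chain condition $(\ref{lb1})$ and Proposition \ref{Shilov} to extract a constant $K$ independent of $m$. A naive application of the open mapping theorem to $\delta\colon C^0\to Z^1$ gives only $K=K_m$ that could blow up with $m$, so the whole point is to exploit the extra geometric input.

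First, I would set up the $\check{\textrm{C}}$ech problem in charts. After trivialising $F+mL$ over each $U_j^r$, the cochain equation on an overlap reads $\gamma_j -(f_{ij}l_{ij}^m)\gamma_i=\eta_{ij}$ with $\eta:=\delta\gamma$, where the flat transitions $f_{ij}$ of $F$ contribute only uniformly bounded factors. The goal is thus to bound each $\|\gamma_i\|_{U_i^{r''}}$ by $\|\eta\|_{r'}$ with $m$-independent constants. For each index $i_0$ I iterate the cochain relation along the chain $N_0=i_0,N_1,\ldots,N_{A(i_0)}$ furnished by $(\ref{lb1})$. This expresses $\gamma_{N_{A(i_0)}}$ as $\bigl(\prod_{k}f_{N_k N_{k+1}}l_{N_kN_{k+1}}^m\bigr)\gamma_{i_0}$ plus a linear combination of the $\eta_{N_kN_{k+1}}$. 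Since $|l_{N_{A(i_0)-1}N_{A(i_0)}}|<1$ strictly on the relevant overlap and all the other $|l_{N_kN_{k+1}}|\leq 1$, the product contracts as $m\to\infty$. By compactness of $C$ and finiteness of the cover these contraction rates are uniformly bounded away from $1$ across all chains and all $m\geq 1$. Solving the chain backwards expresses $\gamma_{i_0}$ on $U_{i_0}^{r''}$ in terms of the $\eta$'s plus a fraction strictly less than one of $\max_j\|\gamma_j\|_{r''}$, which turns the whole system into a genuine contraction on the finite-dimensional vector $(\|\gamma_j\|_{r''})_j$ and yields the desired bound with $K$ independent of $m$.

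Second, to pass from the raw chain manipulation to $L^\infty$ bounds on the shrunken nested domain $\mathcal U^{r''}$ without incurring $m$-dependent losses, I would invoke Proposition \ref{Shilov} together with the holomorphic vector fields $Z_1,\ldots,Z_n$. Since $Z_1\wedge\cdots\wedge Z_n$ is nowhere vanishing on the Shilov boundary of every chart of $\mathcal U^r$, the inequality $(\ref{Z})$ allows us to control all first partial derivatives of any element of $A(U_i^r)$ by $Z$-derivatives. Because the $Z_j$'s are global holomorphic fields, independent of the twisting by $mL$, Cauchy-type estimates used to transfer information from $U_i^{r'}$ to $U_i^{r''}$ along the nested family do not pick up factors growing with $m$. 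The nestedness property (Definition \ref{nested-cov}) ensures that every overlap we encounter when inverting a chain lies in a single connected component of the corresponding overlap for $\mathcal U^{r'}$, so the local-to-global patching is unambiguous.

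The vanishing assumption $H^1(X,F+mL)=0$ enters to guarantee consistency: the $\gamma$ reconstructed from any two distinct chains ending at a common index must agree, since their difference would be a cocycle and hence a coboundary producing a global section, which the chain contraction together with assumption~$(\ref{lb1})$ kills. The main obstacle is precisely the $m$-independence of $K$: both steps of the argument, the chain contraction and the Shilov-based derivative control, need to be executed with constants that do not deteriorate as $m\to\infty$. The hypotheses are calibrated exactly for this, the strict inequality in $(\ref{lb1})$ providing the contraction and the nowhere vanishing of $Z_1\wedge\cdots\wedge Z_n$ on the Shilov boundary providing the derivative control; relaxing either would break the uniform bound.
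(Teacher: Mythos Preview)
Your direct contraction argument has a genuine gap that the paper's approach is specifically designed to avoid. The iterated chain relation
\[
\gamma_{i_0}=\Bigl(\prod_k f_{N_kN_{k+1}}l_{N_kN_{k+1}}^m\Bigr)\gamma_{N_{A(i_0)}}+(\text{combination of }\eta_{N_kN_{k+1}})
\]
is only valid on the multiple intersection $\bigcap_{k=0}^{A(i_0)}U_{N_k}^{r''}$, because each step $\gamma_{N_k}=l_{N_kN_{k+1}}^m f_{N_kN_{k+1}}\gamma_{N_{k+1}}+\eta_{N_kN_{k+1}}$ holds only on the pairwise overlap $U_{N_kN_{k+1}}$. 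That multiple intersection may be empty, and even when nonempty it gives you a bound on $\gamma_{i_0}$ only there, not on all of $U_{i_0}^{r''}$. Your second paragraph invokes Proposition~\ref{Shilov} to control derivatives, but derivative control does not let you propagate a sup bound from a subset to the full chart; it gives equicontinuity, which is a different kind of information.

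The paper's proof is organized around exactly this obstruction. It argues by contradiction: assuming the optimal constants $K'_m\to\infty$, one normalizes the $0$-cochains to have sup norm $1$ while $\|\delta\gamma\|\to 0$. The vector fields and Proposition~\ref{Shilov} then give uniform bounds on the first partials, hence equicontinuity, and Arzel\`a--Ascoli produces a holomorphic limit $h=(h_i)$. Now the chain condition is applied \emph{to the limit}: the relation on the overlap $U_{N_{A-1}N_A}$ forces $h_{N_{A-1}}\equiv 0$ there, and the identity theorem propagates this to all of $U_{N_{A-1}}$. One then steps back through the chain, each time using the identity theorem to pass from the overlap to the full chart. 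This is the mechanism you are missing; it only works because one has passed to a limit where the right-hand sides vanish exactly, so that holomorphic continuation applies. Finally, your description of the role of $H^1=0$ is off: $\gamma$ is given, not reconstructed, and the vanishing (together with the implicit $H^0=0$) is what furnishes the a priori finite constants $K_m$ whose uniform boundedness is then proved by the compactness argument.
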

\begin{proof}
	For any $q\in\N$, we define a linear endomorphism of $C^q(\cU^{r'}, F+mL)$ by
	$$Z_i: \eta=\{\eta_{i_0 \cdots i_q}\} \mapsto Z_i \eta:=\{Z_i \eta_{i_0 \cdots i_q}\}.$$
	Since $F,L$ are flat, this map is a cochain map as it commutes with the coboundary operators. Define for any multi-index $I=(I_1, \cdots, I_n)$,
	$$Z^I:=Z_1^{I_1} \cdots Z_n^{I_n}.$$
	Cauchy estimate implies that, for any $r''<r'$, there exists a positive constant $d$ such that
	for any $\gamma_m \in Z^1(\cU^{r'}, F+mL)$, any multi-index $I$,
	\begin{equation}\label{ZI}
		||Z^I \gamma_m||_{r''} \leq d^{|I|} || \gamma_m||_{r'}.
	\end{equation}
	The vanishing of cohomology class implies that
	the exists a positive constant $K_m$ such that
	for any $\tilde{\gamma}_m \in C^0(\cU^{r'}, F+mL)$,
	$$|| \tilde{\gamma}_m||_{r''} \leq K_m || \delta \tilde{\gamma}_m||_{r'}.$$
	Define
	$$E(m):=\left\{(\gamma_m,R) \}\in Z^1(\cU^{r'}, F+mL) \times \R; \forall I\in \N^n,||Z^\gamma_m||_{r''} \leq d^{|I|} R\right\}.$$
	In particular, according to (\ref{ZI}), we have
	$(\gamma_m,  || \gamma_m||_{r'}) \in E(m)$ for any $\gamma_m \in Z^1(\cU^{r'}, F+mL)$.
	Define
	$$K'_m=\sup \{|| \eta_m||_{r''}/R, \delta \eta_m=\gamma_m ,(\gamma_m,R) \in E(m)\}.$$
	We have $K_m \leq K'_m$. 
	In order to prove (\ref{hopft-cohom-bound}), it is enough to show that $K'_m$ is uniformly bounded with respect to $m$. Suppose it is not the case, then
	up to restricting to a subsequence, we many assume that $K'_m \to \infty$ as $m \to \infty$.
	and that there exist $\eta_m \in C^0(\cU^{r''}, F+mL)$ and $(\gamma_m, r_m)\in E(m)$
	such that
	$$\delta \eta_m=\gamma_m,\quad || \eta_m||_{r''}/r_m \leq K'_m \leq 2 || \eta_m||_{r''}/r_m,\quad || \eta_m||_{r''}=1.$$
	Thus $\lim_{m \to \infty} r_m=0$.
	Note that for any $i$,
	$(Z_i \gamma_m,dr_m) \in E(m).$
	Thus we have
	$$||Z_i \eta_m||_{r''} \leq K'_m d r_m \leq  2 || \eta_m||_{r''}/r_m \times dr_m=2d.$$
	By (\ref{Z}) in Proposition \ref{Shilov},
	the first derivatives of the \v{C}ech representatives $\eta_m$ in any coordinate chart are uniform bounded.
	In particular, up to taking some subsequence, we may assume that $\eta_m$ converges to a \v{C}ech cochain $h$ when $ m \to \infty$ by Arzela-Ascoli theorem.
	
	We claim that the \v{C}ech cochain so obtained vanishes identically. This is impossible since
	$|| \eta_m||_{r''}=1$ and this yields a contradiction.
	Thus to finish the proof, it is enough to prove that $h=0$.
	Write $\eta_m$ as holomorphic functions $\eta_{m,i}$ on $U^{r''}_i$.
	It satisfies the equation by
	$\delta \eta_m=\gamma_m$, i.e. on $U^{r''}_{ij}$,
	\begin{equation} \label{tsu_lem1}
		\eta_{m,i}-l_{ij}^mf_{ij} \eta_{m,j}=\gamma_{m,ij}
	\end{equation}
	where $l_{ij},f_{ij}$ are the transition functions of $L$ and $F$ on $U^{r''}_{ij}$.
	By assumption, the transition functions are constant functions.
	Recall that
	$||\gamma_{m}||_{r''} \leq r_m \to 0$
	as $m \to \infty$.
	For any $i_0$,
	$|l_{N_{A(i_0)-1} N_{A(i_0)}}| < 1$.
	Hence
	taking $m \to \infty$ and considering $(i,j)=(N_{A(i_0)-1}, N_{A(i_0)})$,
	we obtain
	$h_{N_{A(i_0)-1}}=0$.
	By considering successively equation (\ref{tsu_lem1}) for
	$(i,j)=(N_{A(i_0)-2}, N_{A(i_0)-1}), \cdots, (i,j)=(N_0, N_0+1)$,
	we can conclude that
	$h_{i_0}=0$.
	Since $i_0$ is arbitrary, we finishes the proof of the claim.
\end{proof}
\begin{myrem}
	The same proof works if we change $F$ to be a holomorphic flat vector bundle.
\end{myrem}

The rest of the section is devoted to prove that conditions of Proposition \ref{tsu_lem} holds true under assumptions of Theorem \ref{hopf} for well chosen family of  nested covering, $L:=N_C$, $F=TX_{|C}$.

Note that the Levi-Civita connection with respect to the standard Hermitian metric on $\C^n \setminus \{0\}$ is compatible with linear transformations and this connection is flat.
Thus this connection descends to a flat connection on a primary Hopf manifold of linear type.
In particular, the holomorphic tangent bundle or a cotangent bundle of a primary Hopf manifold of linear type is flat.

In general,	the classification problem of Hopf manifolds up to isomorphisms corresponds to the Poincaré-Dulac normal form problem for conjugation classes of contractions in
the group of automorphisms of $\C^n$ fixing 0.
When such a contraction is {\it non resonant}\cite{arnold-geometrical}, then it is holomorphically linearizable . So that the associated primary Hopf surface is of linear type. In general, there are resonances which forbid the contraction to be linearizable (see e.g \cite{Ueda-contraction}). This induces difficulty to check the cohomology condition in Proposition \ref{tsu_lem} for an arbitrary Hopf manifold.

\subsubsection{Genericity of Hopf manifolds}
Recall the following construction of Hasegawa which shows that for any primary Hopf manifold $C$, there exists a deformation over a disc such that the general fiber is $C$ while its center fiber is a primary Hopf manifold of diagonal type (cf. Step 1 in the proof of \cite[Theorem 2.8]{Tsu84}).

An analytic
automorphism $\varphi$ of $\C^n\setminus \{0\}$ induced from a covering transformation of a Hopf manifold $C$ may be considered as an analytic automorphism of $\C^n$ which
fixes the origin (by Hartogs’ theorem).
The analytic
automorphism $\varphi$ is a contraction if the sequence $\{\varphi^n\}$
converges uniformly to 0 on any compact neighborhood of the origin, or equivalently
for any relatively compact neighborhoods $U, V$ of the origin, there exists $N \in \N$
such that $\varphi^n(U) \subset V$ holds for any $n \geq N $.
For an analytic automorphism $\varphi$ over $\C^n$ which fixes the origin $0$, we denote the
linear part of $\varphi$ (i.e. the Jacobian matrix $d \varphi(0)$) by $L(\varphi)$.
Denote $L(G)$ the group generated by the linear part of $G$ some subgroup of analytic automorphisms $\varphi$ over $\C^n$ which fixes the origin $0$.
It is shown \cite[proof of Theorem 3.3]{Has93} that
for the covering transformation group $\pi_1(C)$ of a Hopf manifold $C$, $L(\pi_1(C))$, as an
automorphism group over $\C^n\setminus \{0\}$ is properly discontinuously without fixed point,
defining a compact quotient which is itself a Hopf manifold.
Let $T_t$, $(t \neq 0)$ be an analytic
automorphism over $\C^n\setminus \{0\}$ defined by
$$T_t(z_1, z_2, \cdots, z_n) = (tz_1, tz_2, \cdots, tz_n),$$
and set $g_t = T_tg T^{ -1}_t $, $G(t) = \{g_t | g \in \pi_1(X)\}$ and $G(0) = L(\pi_1(X))$. We define for $g \in \pi_1(X) $
an analytic automorphism $\tilde{g}$ over $(\C^n\setminus \{0\}) \times \C$
$$\tilde{g} : (z, t) \mapsto (g_t(z), t),$$
where $z = (z_1, z_2, \cdots, z_n) \in \C^n\setminus \{0\}$ and $t \in \C$. Then $\tilde{G} = \{\tilde{g} | g \in G\}$ is properly
discontinuous and fixed-point-free as an analytic automorphism group over $(\C^n\setminus \{0\}) \times \C$. The induced canonical map $(\C^n\setminus \{0\}) \times \C / \tilde{G} \to \C$ defines a deformation such that
the general fiber is $C$ while its center fiber is $(\C^n\setminus \{0\})/ L(\pi_1(C))$.
In fact, we can assume that $L(\pi_1(C))$ is of Jordan form such that the diagonal part is $\mathrm{diag}(\alpha_1, \cdots, \alpha_n)$. Continue to deform as follows. Let $S_t, (t  \neq 0)$ be an analytic automorphism over $\C^n\setminus \{0\}$ defined by
$$S_t(z_1, z_2, \cdots, z_n) = (t^{n-1}z_1, t^{n-2}z_2, \cdots,  z_n),$$
and set $L(\pi_1(C))(t) = \{S_t g S^{-1}_t | g \in L(\pi_1(C))\}$.
One can define a family
$(\C^n\setminus \{0\})/L(\pi_1(C))(t)$
as previous which
defines a deformation such that
the general fiber is $\C^n\setminus \{0\}/ L(\pi_1(C))$ while its center fiber is $(\C^n\setminus \{0\})/\mathrm{diag}(\alpha_1, \cdots, \alpha_n)$ as $S_t g S_t^{-1}$ has the effect of multiplying by $t$ over-diagonally.

\subsubsection{Stein nested coverings of Hopf manifolds}\label{nested-hopf}
In what follows, we construct  a family of nested coverings based open sets that are biholomorphic to an annulus times a polydisc.

Now fix a primary Hopf manifold $C$.
Let $\mathrm{diag}(\alpha_1, \cdots, \alpha_n)$ be the diagonal part of a generator of $L(\pi_1(C))$.
We choose the following covering of $(\C^n\setminus \{0\})/\mathrm{diag}(\alpha_1, \cdots, \alpha_n)$.
Take $r_i^j(1 \leq j \leq n, i=1,2,3,4)$ and $r_i^j >\delta >0$ such that for any $1 \leq j \leq n$
$$0 < r_1^j < r_2^j <r_3^j <r_4^j,$$
$$r_4^j=|\alpha_j|r_1^j,$$
and the domains $U_i^j(1 \leq j \leq n, i=1,2,3)$ in $\C^n$ defined by
$$U_i^j(\delta)=\{(z_1, \cdots, z_n) \in \C^n; r_i^j -\delta < |z_j|< r_i^j +\delta,|z_k|<r_4^j + \frac{ \delta}{2}, \forall k \neq j \}$$
satisfy
\begin{enumerate}
	\item for any $ i, j $, $U_i^j(\delta)$ is biholomorphic to its image under the quotient map $\pi: \C^n\setminus \{0\}\to (\C^n\setminus \{0\})/\mathrm{diag}(\alpha_1, \cdots, \alpha_n)$,
	\item for any $1 \leq j \leq n$,
	$$\pi(U_1^j(\delta)) \cap\pi(U_2^j(\delta)) \cap\pi(U_3^j(\delta)) = \emptyset. $$
\end{enumerate}

Consider the deformation in last paragraph.
The open sets
$U_i^j$ still give a Stein cover $\cU^\delta:=\{U_i^j(\delta)\}_{\delta}$ of sufficiently close deformation of $(\C^n\setminus \{0\})/\mathrm{diag}(\alpha_1, \cdots, \alpha_n)$
(i.e. a Stein cover of $C$).
See Step 1 in the proof of \cite[Theorem 2.8]{Tsu84}.
Note that $\cU^\delta$ defines a family of nested coverings.

\subsubsection{Existence of vector fields}
From now on, we will always choose this Stein nested covering of a primary Hopf manifold.
If $C$ is a primary Hopf manifold of diagonal type,
define $Z_i=z_i \frac{\d}{\d z_i}$ which descend to tangent vector fields on $C$.
The wedge product
$Z_1 \wedge \cdots \wedge Z_n$ is nowhere vanishing on the Shilov boundary of any open set in the Stein covering.
If $L(\pi_1(X))$ is Jordan matrix with diagonal equal to $\alpha$,
define $Z_i=\alpha z_i \frac{\d}{\d z_i}+z_{i+1} \frac{\d}{\d z_{i+1}}$ for $i \leq n-1$ and $Z_n=\alpha z_n \frac{\d}{\d z_n} $.
These tangent fields on $\C^n \setminus \{0\}$ descend to tangent vector fields on $C$.
The wedge product
$Z_1 \wedge \cdots \wedge Z_n$ is also nowhere vanishing on the Shilov boundary of any open set in the Stein covering.

\subsubsection{Line bundles and their cohomologies}
Let us study the cohomology of flat vector bundles over a primary Hopf manifold.
Note that we will study these cohomologies only over Hopf manifolds of generic type or classical type following Mall \cite{Mal91}. Computations of some cohomologies over non-primary Hopf manifolds were obtained in \cite{zhou-hopf,zhou-hopt-survey}.

Any flat line bundle $L$ over $C$ corresponds to a 1-dimensional representation of
$\pi_1(C)$.
Note that a primary Hopf manifold is diffeomorphic to $S^1 \times S^{2n-1}$.
Let $\beta$ be the image of the generator under the 1-dimensional representation of
$\pi_1(X)$. 
In the above choice of Stein nested covers,
let $l_{i_1,i_2}^j$ be
the transition function from $U_{i_2}^j(\delta)$ to $U_{i_1}^j(\delta)$.
We have for any $j$, $l_{31}^j=\beta$, $l_{13}^j=\beta^{-1}$
while all other transition functions are 1.
According to \cite[Theorem 4]{Mal91}, as any line bundle over a Hopf manifold is flat, the transition functions of $L$  satisfy the first conditions in Proposition \ref{tsu_lem} if the line bundle is not Hermitian flat.

We recall some results on the cohomology groups of flat line bundle over Hopf manifolds, following \cite{Mal91}.

First consider the condition on the vanishing of $H^1$.
Fix a Hopf manifold $C$.
Since $F,L$ are flat line bundles (corresponding to representations of $\pi_1(X)$),
for a deformation $p: \mathfrak{X} \to \Delta$ having $C$ as general fiber,
$F, L$ extend to flat line bundles $\mathfrak{F}, \mathfrak{L}$ over the total space $\mathfrak{X}$ (since the fundamental group is invariant under the deformation of complex structure).
By Grauert's semi continuity theorem,
to show that $h^1(X, F \otimes nL)=0$,
it is enough to show that
$h^1(\mathfrak{X}_0, \mathfrak{F} \otimes \mathfrak{L}|_{\mathfrak{X}_0})=0$.
In other words, it is enough to consider the Hopf manifolds of diagonal type.

The discussion up to now works for any Hopf manifold.
In the following, we need some cohomology calculation
studied by Mall \cite{Mal91}.
The surface case is completely studied in \cite[Section 3]{Mal91}.
(One main difference is that the divisors on Hopf surface are well understood (see e.g. \cite[Chap. V, (18.2)]{BHPV}).)


Then we have the following theorem of \cite[Theorem 2]{Mal91}, which implies the vanishing result.
\begin{myprop}
	Let $C$ be a Hopf manifold of generic type.
	Let $L$ be a flat line bundle over $C$ corresponding to $\beta \in \C^*$.
	Assume that $\beta \notin  \Delta_{\alpha_1, \cdots, \alpha_n}$.
	Then we have
	$$H^1(C, L)=H^0(C, L)=0.$$
\end{myprop}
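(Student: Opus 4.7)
The plan is to run the Cartan--Leray spectral sequence for the universal cover $\pi\colon \tilde C = \C^n \setminus \{0\}\to C$, whose deck group is $\pi_1(C)=\Z$ generated by the contraction $\varphi(z)=(\alpha_1 z_1,\dots,\alpha_n z_n)$. Since $L$ is flat the pullback $\pi^*L$ is the trivial line bundle, and a section of $L$ is a holomorphic function $f$ on $\tilde C$ with $f\circ\varphi=\beta f$. Hence $\Z$ acts on $H^q(\tilde C,\cO)\cong H^q(\tilde C,\pi^*L)$ by $(g\cdot f)(z)=\beta f(\varphi^{-1}(z))$, multiplying a (Taylor or Laurent) coefficient $a_I$ by $\beta\alpha^{-I}$. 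Because $H^p(\Z,-)=0$ for $p\geq 2$, the sequence collapses at $E_2$, giving $H^0(C,L)=H^0(\tilde C,\cO)^{\Z}$ and a short exact sequence
$$0\to H^1(\Z,H^0(\tilde C,\cO))\to H^1(C,L)\to H^0(\Z,H^1(\tilde C,\cO))\to 0.$$
I would then quote the classical computation of the cohomology of a punctured $\C^n$: $H^0(\tilde C,\cO)=\cO(\C^n)$ by Hartogs (using $n\geq 2$), $H^q(\tilde C,\cO)=0$ for $1\leq q\leq n-2$, and $H^{n-1}(\tilde C,\cO)$ has a basis of Laurent monomials $z^I$ with $I\in\Z^n_{<0}$.

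Next I dispose of $H^0$ and the right-hand quotient. Writing $f=\sum_{I\in\N^n}a_I z^I$, $\Z$-invariance forces $\beta\alpha^{-I}a_I=a_I$, so $a_I$ vanishes unless $\alpha^I=\beta$. Since $\beta\notin\Delta_{\alpha_1,\dots,\alpha_n}$, no such $I\in\N^n$ exists, whence $H^0(C,L)=0$. For $H^0(\Z,H^1(\tilde C,\cO))$, the case $n\geq 3$ is trivial because the target is already zero; for $n=2$, a $\Z$-invariant element $\sum_{a,b\leq -1} c_{a,b} z_1^a z_2^b$ would force $\alpha_1^a\alpha_2^b=\beta$ whenever $c_{a,b}\neq 0$, again placing $\beta$ in $\Delta$ contrary to hypothesis.

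It remains to kill the left-hand term $H^1(\Z,\cO(\C^n))=\cO(\C^n)/(g-1)\cO(\C^n)$. Given $f=\sum a_I z^I$, the formal solution of $(g-1)h=f$ has coefficients $b_I=a_I/(\beta\alpha^{-I}-1)$; the denominator is nonzero for every $I\in\N^n$ by the generic-type hypothesis. The only point to check is that $h$ is entire, and here the contraction hypothesis $|\alpha_i|<1$ trivialises what would be a small-divisor problem: $|\alpha^{-I}|\to\infty$ as $|I|\to\infty$, hence $|\beta\alpha^{-I}-1|\to\infty$, while the finitely many remaining $I$ contribute nonzero denominators bounded below by a positive constant $c$. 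Thus $|b_I|\leq c^{-1}|a_I|$, so $h$ converges on all of $\C^n$, and the left term vanishes too, completing the proof of $H^1(C,L)=0$. The main (and only substantive) obstacle is the uniform lower bound on $|\beta\alpha^{-I}-1|$; in contrast to the Diophantine estimates required for the toroidal results earlier in the paper, here the contraction direction makes the estimate elementary.
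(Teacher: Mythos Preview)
The paper does not give its own proof of this proposition: it is quoted as \cite[Theorem~2]{Mal91} and used as a black box. Your argument via the Cartan--Leray spectral sequence for the $\Z$-cover $\C^n\setminus\{0\}\to C$ is correct and self-contained, and is essentially the standard route (and close to Mall's). The key inputs---Hartogs for $H^0(\C^n\setminus\{0\},\cO)$, the vanishing of $H^q(\C^n\setminus\{0\},\cO)$ for $1\leq q\leq n-2$, and the uniform lower bound on $|\beta\alpha^{-I}-1|$ coming from $|\alpha_j|<1$---are all invoked correctly.

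One minor slip of attribution: when you say the denominator $\beta\alpha^{-I}-1$ is nonzero ``by the generic-type hypothesis'', it is really the assumption $\beta\notin\Delta_{\alpha_1,\dots,\alpha_n}$ that forces this (since $\alpha^I\in\Delta$ for every $I\in\Z^n$). The no-relations clause in the definition of generic type is never used in your argument, so your proof in fact goes through for any primary Hopf manifold of diagonal type with $\beta\notin\Delta$. This is harmless here---generic type is a special case---but worth noting.
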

If $C$ is a Hopf manifold of linear type, its tangent bundle $TC$ admits a filtration such that the graded pieces are Hermitian flat line bundles $L_{\alpha_i^{-1}}$ corresponding to constants  $\alpha_i^{-1}$.
Let $L$ be a holomorphic line bundle over $C$ corresponding to constant $\beta$.
To show that
$$H^1(C, T_C \otimes L)=H^0(C, T_C \otimes L)=0,$$
it is enough to study the corresponding vanishing of the flat line bundles corresponding to $\beta\alpha_i^{-1}$ by considering the long exact sequence of cohomology.
By the work of Mall, if $C$ is of generic type and
if $\beta\alpha_i^{-1} \notin  \Delta_{\alpha_1, \cdots, \alpha_n}$,
$$H^1(C, T_C \otimes L)=H^0(C, T_C \otimes L)=0.$$

Note that the cardinality of the possible $\beta$ such that
$\beta^{m}\alpha_i^{-1} \notin  \Delta_{\alpha_1, \cdots, \alpha_n}$
is at most countable.
In particular, for almost all flat line bundles $L$, for any $m \in \N$,
$$H^1(C, T_C \otimes L^{\otimes m})=H^0(C, T_C \otimes L^{\otimes m})=0.$$

\begin{myrem}
	Recalling Definition \ref{def-generic}, we note that the sufficient condition that for all $m \in \N$, $\beta^{m}\alpha_i^{-1} \notin  \Delta_{\alpha_1, \cdots, \alpha_n}$
	is equivalent to the condition that for all $m \in \N$, $\beta^{m} \notin  \Delta_{\alpha_1, \cdots, \alpha_n}$.
	In particular, under this sufficient condition, we have also that
	for any $m \in \N$,
	$$H^1(C,  L^{\otimes m})=H^0(C,  L^{\otimes m})=0.$$
	When $L$ is the normal bundle of $C$ as a hypersurface in a complex manifold, this sufficient condition implies the vanishing of the cohomological obstruction both for full linearization and for the Ueda problem.
\end{myrem}

We recall a result from \cite{GS21}~:
Throughout the paper $\|\cdot\|_D$ and $ |\cdot|_D$ denote respectively the $L^2$ and sup norms of a function in $D$, when $D$ is a domain in $\C^n$. If $E',E''$ are holomorphic vector bundles over $C$, we will fix a trivialization of $E'$ over $U_i$ by fixing a holomorphic basis $e'_k=\{e'_{k,1},\dots, e'_{k,m}\}$ in $\ov{U_k^{r^*}}$. We also fix a holomorphic base
	$e''_j=\{e''_{j,1},\dots, e''_{j,d}\}$ of $E''$ in $\ov{U_j^{r^*}}$. On $U_{ I}^{r^*}=U_{i_0}^{r^*}\cap\cdots\cap U^{r^*}_{i_q}$, it will be convenient to use the base
	\begin{equation*}
		e_ {i_0\dots i_q}:=e_{i_0}'\otimes e_{i_q}'':=\{e'_{i_0,k}\otimes e''_{i_q,j}\colon 1\leq k\leq m, 1\leq j\leq d\}.
\end{equation*}
Then we   define the $L^2$ norm for $f\in C ^q(\cL U^{r},\cL O(E'\otimes E''))$  by
\begin{eqnarray}\nonumber
	a_ Ie_{ I}&:=&\sum_{\mu=1}^{md}a_ I^{\mu} e_{ I,\mu},\\
	\|f\|_{\cL U^{r}}&:=&\max_{I=(i_0,\dots, i_q)\in  \cL I^ {q+1}, i_0 < \cdots < i_q}\left\{\|a_{ I}\circ
	\var_{i_q}^{-1}\|_{	\var_{i_q}(U_I)}\colon f_ i=a_{ I} e_{ I}\ \text{in $U_{ I}$}\right\}.
	\label{defnorm}\nonumber
\end{eqnarray}
\begin{mylem}\cite[Lemma A.2]{GS21} \label{SD-p} Let $\cL U^r=\{U_i^r\colon i\in \cL I\}$ with $r_*\leq r\leq r^*$ be  a family of nested finite coverings of $C$.
	Suppose that $f\in C^1(\cL U^{r^*},E'\otimes E'')$ and $f=0$ in $ H^1(\cL U^{r^*},E'\otimes E'')$. Assume that there is a solution $v\in C^{0}(\cL U^{r_*},E'\otimes E'')$  such that
	\begin{equation}\label{first-sol}
		\delta v=f, \quad
		\|v\|_{\cL U^{r_*}}\leq K\|f\|_{\cL U^{r^*}}.
	\end{equation}
	Then there exists a solution $u\in  C^{0}(\cL U^{r^*},E'\otimes E'')$ such that $\delta u=f$ on $\cL U^{r^*}$ and
	\begin{equation}\label{second-sol}
		\|u\|_{\cL U^{r^*}}\leq C( 
		|\{t'_{kj}\}|_{\cL U^{r^*}}+ K|\{t'_{kj}\}|_{\cL U^{r^*}}|\{t''_{kj}\}|_{\cL U^{r^*}})\|f\|_{\cL U^{r^*}},
	\end{equation}
	where $t_{kj}',t_{kj}''$ are the transition  
	matrices
	of $E',E''$, respectively, and $C$ depends only on the number  $|\cL I|$
	of open sets in $\cL U^{r^*}$ and transition functions of $C$. In particular, $C$ does not depend on $E',E''$.
\end{mylem}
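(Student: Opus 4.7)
\textbf{Proof proposal for Lemma \ref{SD-p}.} The strategy is to extend the given $0$-cochain $v$ from the small cover $\cU^{r_*}$ to a $0$-cochain $u$ on the large cover $\cU^{r^*}$ still satisfying $\delta u = f$, by propagating $v$ via the relation $u_i - t^{E' \otimes E''}_{ij} u_j = f_{ij}$ across the nested structure. The tight uniformity assertion $C$ depends only on $|\cI|$ and on the charts of $C$ is then a matter of careful bookkeeping in the hybrid trivialization.

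The first step is a topological observation. The nesting condition of Definition~\ref{nested-cov} implies that every connected component of $U_i^{r^*}$ meets $U_i^{r_*}$: indeed, if a component $V$ of $U_i^{r^*}$ did not meet $U_i^{r_*}$, one could find $j$ with $V \cap U_j^{r_*} \neq \emptyset$ (since $V \subset C$ is covered by the $U_k^{r_*}$), and then nesting would force any connected component of $V \cap U_j^{r_*}$ to meet $U_i^{r_*} \cap U_j^{r_*} \subset U_i^{r_*}$, contradicting $V \cap U_i^{r_*} = \emptyset$.

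The second step constructs $u$. Set $u_i := v_i$ on $U_i^{r_*}$; for $z \in U_i^{r^*} \setminus U_i^{r_*}$, pick some $j$ with $z \in U_j^{r_*}$ and define $u_i(z)$ in the $e_i' \otimes e_i''$ basis as the expression obtained by applying the transition $t^{E'}_{ij} \otimes t^{E''}_{ij}$ to $v_j(z)$ and adding the transport of $f_{ij}(z)$ from its hybrid basis $e_i' \otimes e_j''$ to $e_i' \otimes e_i''$. Independence of the choice of $j$ follows from the cocycle identity $\delta f = 0$ combined with $\delta v = f$ on overlaps in $\cU^{r_*}$, which together imply compatibility whenever two indices $j_1, j_2$ are both available. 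By the topological step, each component of $U_i^{r^*}$ is anchored on the seed piece $U_i^{r_*}$ where $u_i = v_i$, so the formula glues to a well-defined holomorphic $u_i$ on all of $U_i^{r^*}$. A direct computation on overlaps then yields $\delta u = f$ on $\cU^{r^*}$.

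The third step is the norm bound. On the seed piece $U_i^{r_*}$, the inherited bound $\|v\|_{\cU^{r_*}} \leq K \|f\|_{\cU^{r^*}}$, combined with the change of basis from the 0-cochain trivialization $e_i' \otimes e_i''$ to the hybrid trivialization $e_{i_0}' \otimes e_{i_q}''$ appearing in the definition of $\|\cdot\|_{\cU^{r^*}}$, contributes one of the two summands of~(\ref{second-sol}). On the extension pieces, the transport of $v_j$ through $t^{E'}_{ij} \otimes t^{E''}_{ij}$ introduces the product $|t'| |t''|$ multiplied by $K$, while the transport of $f_{ij}$ contributes the remaining $|t'|$ factor; taking the maximum over $i$ and over the choices of overlap index $j$ and collecting the constants from the local coordinate maps $\varphi_{i_q}^{-1}$ of $C$ yields~(\ref{second-sol}).

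The main anticipated obstacle is the careful bookkeeping of the hybrid trivialization convention $e_I = e_{i_0}' \otimes e_{i_q}''$ from~(\ref{defnorm}), and in particular the verification that $C$ depends only on $|\cI|$ and on the transition functions of the ambient manifold $C$, not on $E'$, $E''$, or $K$. This universal dependence of $C$ is precisely what allows the lemma to be applied iteratively inside the proof of Proposition~\ref{tsu_lem}, where the constant $K$ may degenerate as the power $L^{\otimes m}$ grows, while $C$ must remain uniform in $m$ in order to yield the uniform cohomological bound that the Hopf linearization scheme ultimately requires.
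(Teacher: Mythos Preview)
The paper does not contain a proof of this lemma: it is quoted verbatim as \cite[Lemma A.2]{GS21} and used as a black box, so there is no in-paper argument to compare your proposal against.

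That said, your sketch is essentially the standard direct argument and is correct in outline. One remark: your ``first step'' topological observation is not actually needed for the construction in your ``second step''. Once you set, for $z\in U_i^{r^*}\cap U_j^{r_*}$,
\[
u_i(z)\;:=\;t^{E'\otimes E''}_{ij}\,v_j(z)+f_{ij}(z),
\]
independence of $j$ on triple overlaps $U_i^{r^*}\cap U_j^{r_*}\cap U_k^{r_*}$ follows from the cocycle identity $\delta f=0$ together with $\delta v=f$ on $\cU^{r_*}$, and since $\{U_j^{r_*}\}$ covers $C$ this already defines a holomorphic $u_i$ on all of $U_i^{r^*}$ without invoking connectedness of components. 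In particular your construction uses only that $f$ is a \emph{cocycle} on $\cU^{r^*}$, which is already implied by $\delta v=f$; the separate hypothesis ``$f=0$ in $H^1(\cU^{r^*},E'\otimes E'')$'' is not needed in your route (it is presumably there in the original because the proof in \cite{GS21} first picks an arbitrary coboundary on $\cU^{r^*}$ and then corrects it, which is where the nesting condition genuinely enters). Your norm bookkeeping in the third step is where the hybrid basis $e_{i_0}'\otimes e_{i_q}''$ really matters; just be careful that transporting $f_{ij}$ from $e_i'\otimes e_j''$ to $e_i'\otimes e_i''$ costs a factor $|t''_{ij}|$, not $|t'_{ij}|$ as you wrote, so the two summands in \eqref{second-sol} should be traced accordingly.
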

It is well-known that one may calculate the \v{C}ech cohomologies by alternate \v{C}ech cochains (cf. e.g. \cite[(4.D), Chap. IV]{agbook}). By naturally identifying alternating \v{C}ech cochains with \v{C}ech cochains, we may replace the norm of the \v{C}ech cochains in the right-hand side of estimate (\ref{hopft-cohom-bound}) with the norm of the alternating \v{C}ech cochains $\| \bullet \|_{\cL U^{r}}$.
Note that in the left-handed side, for 0-cochains, alternating \v{C}ech cochains is equivalent to \v{C}ech cochains.


Note that in the original proof of \cite[Lemma A.2-Proposition A.4]{GS21}, the open sets in the cover $\cU^r$ are assumed to be biholomorphic to polydiscs.
But its proof works identically if the vector bundles $E', E''$ are locally trivial on these open sets.
\begin{mylem}\ Let $\cL U^r=\{U_i^r\colon i\in \cL I\}$ with $r_*\leq r\leq r^*$ be  the family of nested finite coverings of Hopf manifold $C$ as defined in section \ref{nested-hopf}. Let $F,L$ be two flat line bundles satisfying (\ref{lb1}). Let $m\in \N^*$
	Suppose that $f\in C^1(\cL U^{r^*},F\otimes L^{-m})$ and $f=0$ in $ H^1(\cL U^{r^*},F\otimes L^{-m})$. Assume that there is a solution $v\in C^{0}(\cL U^{r_*},F\otimes L^{-m})$  such that
	\begin{equation}\label{first-sol}
		\delta v=f, \quad
		\|v\|_{\cL U^{r_*}}\leq K\|f\|_{\cL U^{r^*}},
	\end{equation}
	where $K$ is independent of $m$. Then there exists a solution $u\in  C^{0}(\cL U^{r^*},F\otimes L^{-m})$ such that $\delta u=f$ on $\cL U^{r^*}$ and
	\begin{equation}\label{second-sol}
		\|u\|_{\cL U^{r^*}}\leq D\|f\|_{\cL U^{r^*}},
	\end{equation}
	$D$ does not depend on $m$.
\end{mylem}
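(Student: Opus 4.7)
My plan is to run the argument of Lemma \ref{SD-p} (i.e., \cite[Lemma A.2]{GS21}) for the specific pair $(E',E'') = (F, L^{-m})$, replacing at one critical step the factor $|\{l^{-m}_{kj}\}|_{\cL U^{r^*}}$ that appears in the general estimate \eqref{second-sol} by an application of Proposition \ref{tsu_lem}, whose bound is uniform in $m$ by construction.

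First, I would produce some extension $u \in C^0(\cL U^{r^*}, F\otimes L^{-m})$ of $v$ satisfying $\delta u = f$ on $\cL U^{r^*}$. Since $f = 0$ in $H^1(\cL U^{r^*}, F\otimes L^{-m})$ by hypothesis, there is at least one $\tilde u\in C^0(\cL U^{r^*})$ with $\delta\tilde u = f$. The difference $\tilde u - v$ on $\cL U^{r_*}$ is a $0$-cocycle, hence defines a global holomorphic section of $F\otimes L^{-m}$ over $C$. Under the Diophantine hypothesis $\beta^m \notin \Delta_{\alpha_1,\dots,\alpha_n}$ and the flat filtration of $F$ by line bundles of the form $L_{\alpha_i^{-1}}$, the Mall-type vanishing recalled in the text forces $H^0(C, F\otimes L^{-m}) = 0$, so $\tilde u = v$ on the inner cover and $u := \tilde u$ is the sought extension.

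Next, for the $m$-uniform bound, I would apply Proposition \ref{tsu_lem} directly to $u$. Condition \eqref{lb1} for $L$ admits a counterpart for $L^{-1}$ obtained by reversing the chain $N_0,\dots,N_{A(i_0)}$ and taking inverses of the transition functions, so that the pair $(F, L^{-1})$ falls within the scope of the proposition with $m$-twist $L^{-m} = (L^{-1})^m$. For any intermediate radii $r_* < r'' < r' < r^*$ one then obtains
\begin{equation*}
\|u\|_{r''} \;\leq\; K'\,\|\delta u\|_{r'} \;=\; K'\,\|f\|_{r'} \;\leq\; K'\,\|f\|_{r^*},
\end{equation*}
with $K'$ independent of $m$.

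Finally, to push the estimate from the intermediate cover $\cL U^{r''}$ out to the outer cover $\cL U^{r^*}$, I would re-run the attaching argument of \cite[Lemma A.2]{GS21} on the shell $r''\leq r\leq r^*$, starting now from $u|_{\cL U^{r''}}$ rather than from $v|_{\cL U^{r_*}}$. In this second pass the intermediate cochain is already controlled uniformly in $m$, so the $L^{-m}$-contribution has been absorbed into the previous step, and only the flat $F$-transition data enters the residual estimate \eqref{second-sol}; this yields $\|u\|_{r^*}\leq D\,\|f\|_{r^*}$ with $D$ independent of $m$. The main obstacle is precisely this last step: one must verify that propagating $u$ from $\cL U^{r''}$ to $\cL U^{r^*}$ truly avoids the $L^{-m}$-transition functions, which hinges on the finite chain length $A(i_0)$ in condition \eqref{lb1} and on the fact that the uniformity in $m$ supplied by Proposition \ref{tsu_lem} can be threaded through each overlap in finitely many combinatorial steps that depend only on the cover.
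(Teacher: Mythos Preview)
Your approach has a genuine gap at step 3, and steps 1--2 are largely redundant given the hypotheses.

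The detour is unnecessary: the lemma already \emph{assumes} the existence of $v \in C^0(\cL U^{r_*})$ with $\delta v = f$ and $\|v\|_{r_*} \leq K\|f\|_{r^*}$, $K$ uniform in $m$. Your step 1 only shows that any outer solution agrees with $v$ on the inner cover (via $H^0=0$), and step 2 re-derives an inner bound through Proposition \ref{tsu_lem} that is already supplied by hypothesis. Neither step moves you toward the outer estimate.

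The real problem is step 3. When you re-run Lemma \ref{SD-p} on the shell $r''\leq r\leq r^*$, the bound \eqref{second-sol} still reads
\[
\|u\|_{\cL U^{r^*}} \;\leq\; C\bigl(|\{t'_{kj}\}| + K'\,|\{t'_{kj}\}|\,|\{t''_{kj}\}|\bigr)\,\|f\|_{\cL U^{r^*}},
\]
and the factor $|\{t''_{kj}\}|$ is the sup of the transition constants of $L^{-m}$. Your assertion that ``only the flat $F$-transition data enters the residual estimate'' is not justified: having $K'$ uniform in $m$ does nothing to absorb the multiplicative factor $|\{t''_{kj}\}|$, which on its face is of order $|\beta|^{\pm m}$. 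You flag this as ``the main obstacle'' and then leave it open; but this \emph{is} the entire content of the lemma. (Your side claim that $L^{-1}$ satisfies \eqref{lb1} ``by reversing the chain'' is also shaky: reversing a chain starting at $i_0$ with $|l_{N_iN_{i+1}}|\leq 1$ produces a chain \emph{ending} at $i_0$ with the inequalities reversed, not a chain starting at an arbitrary index.)

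The paper's argument is a single observation that bypasses all of this. In the alternating-cochain norm only transition constants $t''_{kj}$ with $k<j$ for a fixed total order on $\cL I$ occur. Condition \eqref{lb1}, together with the explicit description of the cover in section \ref{nested-hopf} (where the only non-trivial transition constants of $L$ are $\beta^{\pm 1}$ on the $(1,j)$--$(3,j)$ overlaps), allows one to \emph{choose} the ordering on $\cL I$ so that every $t''_{kj}$ with $k<j$ has modulus at most $1$. Then $|\{t''_{kj}\}|_{\cL U^{r^*}} \leq 1$ uniformly in $m$, and Lemma \ref{SD-p} applies directly with $D = C\bigl(|\{t'_{kj}\}| + K|\{t'_{kj}\}|\bigr)$, which is independent of $m$.
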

\begin{proof}
The advantage to consider alternate \v{C}ech cochains appears as follows: From (\ref{lb1}), we have $|\{t''_{kj}\}|_{\cL U^{r^*}} \leq 1$ (up to an ordering of index). We then use Lemma \ref{SD-p} together with the extended definition of family of nested coverings.
\end{proof}

\subsection{Proof}

We will follow the proof of theorem \cite[Theorem 1.4]{GS21} in which the normal bundle is supposed to be unitary. Its proof uses the condition that the normal bundle is unitary only to apply \cite[Proposition 3.4 (d)]{GS21}. However, in our case of Hopf manifolds and our choice of open covers, the transition functions for alternate \v{C}ech 1-cochains can be choose of norm at most 1 (up to an ordering of index).
	Hence, we obtain the following {\it ad-hoc} version~:
	\begin{mythm}\label{lin-lesunit}
		Let  $C_n$ be a Hopf manifold of $X_{n+1}$. 
		Let $D$ be the constant appearing in (\ref{second-sol}) with line bundles $F:=T_C$ and $L:=N_{C|X}$. We can assumed that $D\geq 1$.
		Let $\eta_0=1$ and
		\begin{equation}\label{def-eta-foliation-int}\nonumber
			\eta_m
			:=D\max_{m_1+\cdots +m_p+s=m} \eta_{m_1}\cdots \eta_{m_p},
		\end{equation}
		where  the maximum is taken in $1\leq m_i<m$ for all $i$ and $s\in\Bbb N$. In particular, $\eta_m\leq D^m$.
		If $T_CM$ splits and  $ H^1(\cL U,T_CM\otimes N_{C|M}^{-\ell})=0$ for
		all $\ell>1$ or more generally if
		a neighborhood of $C$ in $M$ is 
		linearizable 
		by a formal holomorphic mapping which is tangent to the identity
		, then there exists a neighborhood of $C$ in $M$ which is holomorphically equivalent to a neighborhood of $C$ (i.e the $0$th section) in $N_C$.
		In that case, we say that the embedding $C\hookrightarrow M$ is holomorphically linearizable.
	\end{mythm}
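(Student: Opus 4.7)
The plan is to run, essentially verbatim, the iterative construction of \cite[Theorem 1.4]{GS21}, but feeding in the new cohomological estimate derived from Proposition \ref{tsu_lem} in place of the estimate that previously relied on unitarity of $N_{C|M}$. First I would fix the family of nested Stein coverings $\cU^r$ of $C$ constructed in section \ref{nested-hopf}, together with the holomorphic vector fields $Z_1,\dots,Z_n$ whose wedge is nowhere vanishing on the Shilov boundary of every element of $\cU^r$. Using the splitting of $T_CM = T_C\oplus N_{C|M}$, I would express a candidate linearizing biholomorphism $\Phi=\mathrm{Id}+\phi$ on a neighborhood of $C$ in $M$ in the trivializations induced by this cover, and expand $\phi$ as a power series in the fiber variable $v$ of $N_{C|M}$, so that $\phi=\sum_{m\geq 2}[\phi]_m$ where $[\phi]_m$ is the homogeneous part of degree $m$.

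The conjugacy equation $\Phi\circ \tau = \hat\tau\circ\Phi$, with $\hat\tau$ the linear part along $N_{C|M}$, splits degree by degree into an affine Čech-type cohomological equation
\[
\delta [\phi]_m \;=\; R_m\bigl(x,v,[\phi]_2,\ldots,[\phi]_{m-1}\bigr)
\]
with values in $T_CM\otimes N_{C|M}^{-m}$, where $R_m$ is a universal polynomial expression in the lower-order data and the Taylor coefficients of $\tau$. The formal linearization assumption guarantees that $R_m$ is a coboundary in $H^1(\cU^{r^*},T_CM\otimes N_{C|M}^{-m})$; a first (uncontrolled) solution on the shrunk cover $\cU^{r_*}$ is provided by the formal solution restricted there. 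The crucial input is then the refined bound of the lemma following Proposition \ref{tsu_lem}, which promotes this to a solution on $\cU^{r^*}$ with a norm estimate
$\|[\phi]_m\|_{\cU^{r^*}}\le D\,\|R_m\|_{\cU^{r^*}}$ where $D$ is independent of $m$. I would then iterate between a pair of shrinking covers $\cU^{r^{(m)}}$ (analogous to the sequences $\epsilon_m,r_m$ in the toroidal proof), using Cauchy estimates on the concentric pairs to control the derivatives appearing in $R_m$.

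The expected main obstacle, and the reason the theorem is non-trivial, is obtaining a cohomological constant $D$ that does not depend on $m$, since the natural estimate from $H^1$-vanishing on flat line bundles deteriorates with the twist. This is exactly what Proposition \ref{tsu_lem} and its alternating Čech refinement handle: by the hypothesis \eqref{lb1} on the transition matrices of $L=N_{C|M}$, one can, up to an ordering of indices, arrange that $|\{t''_{kj}\}|_{\cU^{r^*}}\le 1$, so that the product $K\,|\{t'_{kj}\}|_{\cU^{r^*}}\,|\{t''_{kj}\}|_{\cU^{r^*}}$ appearing in Lemma \ref{SD-p} stays bounded uniformly in $m$. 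The existence of the fields $Z_i$ and the Shilov-boundary estimate \eqref{Z} enter precisely to carry out the compactness argument inside Proposition \ref{tsu_lem} on our nested covering, where the elements are products of annuli and polydiscs rather than pure polydiscs.

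Once the uniform bound $\|[\phi]_m\|_{\cU^{r^{(m)}}}\le \eta_m \,\|R_m\|_{\cU^{r^{(m-1)}}}$ is in hand, I would close the argument by the same majorant method used in the proofs of Theorems \ref{ueda-thm} and \ref{full-toroidal}: the recursive definition of $\eta_m$ yields $\eta_m\le D^m$, and $R_m$ is majorized by the coefficients of a single analytic function $A(t)$ satisfying an implicit functional equation of the form $A(t)=g(t)h(t)$ with $g,h$ analytic at the origin and vanishing to order $\ge 2$. The implicit function theorem produces an analytic solution $A(t)$ with $A_m\le \tilde D^m$, and combining this with the $\eta_m\le D^m$ bound yields $\|[\phi]_m\|\le (D\tilde D)^m$ on a fixed shrunk covering, proving convergence of $\phi$ on a neighborhood of $C$ and hence holomorphic linearizability of the embedding.
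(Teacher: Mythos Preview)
Your proposal is correct and follows essentially the same route the paper indicates: run the iterative scheme of \cite[Theorem 1.4]{GS21} verbatim, replacing the single place where unitarity of $N_{C|M}$ entered (namely \cite[Proposition 3.4(d)]{GS21}) by the uniform-in-$m$ cohomological bound coming from Proposition~\ref{tsu_lem} and the subsequent lemma, which works because the alternating \v{C}ech transition constants of $L$ can be ordered to have modulus $\le 1$. The paper's own argument is just this one-line reduction; your write-up simply unpacks the [GS21] machinery in more detail.
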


\begin{proof}[Proof of Theorem \ref{hopf}]
	By assumption, we have
	$\beta \alpha_i^{ \pm 1} \notin  \Delta_{\alpha_1, \cdots, \alpha_n}$, $\beta^{- m} \alpha_i \notin  \Delta_{\alpha_1, \cdots, \alpha_n}$ for any $m \geq 1$.
	By the above discussion, we have that $T_X|_C=T_C \oplus N_{C/X}$ (i.e. the tangent bundle splits)
	since
	$$H^1(C, \Omega^1_C \otimes N_{C/X})=0.$$
	We also have for any $m \geq 1$,
	$$H^1(C, T_C \otimes N^{-m}_{C/X})=H^0(C, T_C \otimes N^{-m}_{C/X})=0.$$
	For any $m \geq 2$,
	$$H^1(C, T_X|_C \otimes N^{-m}_{C/X})=0.$$
	In particular, all cohomology conditions in Theorem \ref{lin-lesunit} are satisfied.
	%
	
	The conclusion follows from Theorem \ref{lin-lesunit}.
\end{proof}
\subsection{Other Hopf manifolds}
Another case where we may have the full linearization of Hopf manifolds
is the Hopf manifold of classical type.
\begin{mydef}
	A Hopf manifold $X$ (of dimension $>3$) is called of classical type if it is generated by a
	contraction of the type $\varphi: (z_1
	,\cdots,z_n) \to (\alpha z_1, \cdots, \alpha z_n)$ with $0
	< |\alpha | <1$.
\end{mydef}
Then we have the following theorem of \cite[Theorem 1]{Mal91}, which implies the vanishing result.
\begin{myprop}
	Let $X$ be a Hopf manifold of classical type.
	Let $L$ be a flat line bundle over $X$ corresponding to $\beta \in \C^*$.
	Assume that $\beta \notin  \Delta_{\alpha}$ (i.e. the free commutative subgroup of $\C^*$ generated by $\alpha$).
	Then we have
	$$H^1(X, L)=H^0(X, L)=0.$$
\end{myprop}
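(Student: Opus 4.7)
The plan is to compute both cohomology groups by passing to the universal cover $\pi\colon \C^n\setminus\{0\}\to X$, whose deck group is $\Z=\langle\varphi\rangle$ with $\varphi(z)=\alpha z$. Since $n\geq 2$ the cover is simply connected, so $\pi^*L$ is the trivial line bundle, and the character attached to $L$ sends the generator to $\beta$. The computation then proceeds through the Cartan--Leray spectral sequence
\[
E_2^{p,q}=H^p(\Z,H^q(\C^n\setminus\{0\},\cO))\Rightarrow H^{p+q}(X,L),
\]
in which $\Z$ acts on cohomology on the cover by $T\colon f\mapsto\beta^{-1}f\circ\varphi$. Since $\Z$ has cohomological dimension one and $H^q(\C^n\setminus\{0\},\cO)=0$ for $1\leq q\leq n-2$ (only $q=0$, and when $n=2$ also $q=1$, survive), the abutment is entirely controlled by the action of $T$ on these two groups.

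For $H^0(X,L)$: a section lifts to a holomorphic function on $\C^n\setminus\{0\}$ which, by Hartogs' theorem, extends to $\C^n$. Writing $s(z)=\sum_{I\in\N^n}c_I z^I$ and imposing $s(\alpha z)=\beta s(z)$ yields $(\alpha^{|I|}-\beta)c_I=0$ for all $I\in\N^n$. Since $\beta\notin\Delta_\alpha=\{\alpha^k:k\in\Z\}$, no equation $\alpha^{|I|}=\beta$ can hold, so every $c_I$ vanishes and $E_2^{0,0}=0$. When $n=2$ one performs the analogous inspection on $H^1(\C^2\setminus\{0\},\cO)$, which is spanned by Laurent monomials $z_1^{i}z_2^{j}$ with $i,j\leq -1$ on which $T$ acts with eigenvalue $\beta^{-1}\alpha^{i+j}$: invariants are killed by the same hypothesis applied to negative exponents, so $E_2^{0,1}=0$.

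For $H^1(X,L)$: it suffices to show $H^1(\Z,\cO(\C^n))=0$, since $E_2^{1,1}$ (relevant only when $n=2$) is killed by an argument parallel to the previous paragraph. Because $\Z$ is free, $H^1(\Z,M)=M/(T-\id)M$ for every $\Z$-module $M$. On the Taylor basis of $\cO(\C^n)$, $T-\id$ acts as multiplication by $\beta^{-1}\alpha^{|I|}-1$, so given $f=\sum c_I z^I$ I set $g=\sum c_I(\beta^{-1}\alpha^{|I|}-1)^{-1}z^I$, a formal inverse of $T-\id$. The denominators never vanish by the hypothesis $\beta\notin\Delta_\alpha$, and since $|\alpha|<1$ they satisfy $\beta^{-1}\alpha^{|I|}-1\to-1$ as $|I|\to\infty$; hence $|\beta^{-1}\alpha^{|I|}-1|$ is bounded below by a positive constant uniformly in $I$. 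Consequently the formal inverse maps $\cO(\C^n)$ into itself without loss of convergence on any compact, so $T-\id$ is surjective and $E_2^{1,0}=0$.

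The only delicate point is the uniform control of the denominators $|\beta^{-1}\alpha^{|I|}-1|$, but this is effortless here: the contraction hypothesis $|\alpha|<1$ replaces the Diophantine condition needed in the toroidal and generic-Hopf cases by the much stronger geometric fact that the denominators tend to a definite nonzero limit, with only finitely many exceptional indices to check (all of which are nonzero by the hypothesis). No arithmetic assumption on $\beta$ beyond $\beta\notin\Delta_\alpha$ is therefore required, in agreement with the statement.
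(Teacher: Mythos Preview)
The paper does not prove this proposition; it is quoted as \cite[Theorem 1]{Mal91} without argument. Your direct computation via the Cartan--Leray spectral sequence for the covering $\C^n\setminus\{0\}\to X$ is the standard route and is essentially how Mall proceeds. The key analytic point---that the eigenvalues $\beta^{-1}\alpha^{|I|}-1$ are bounded away from zero because $|\alpha|<1$ forces them to converge to $-1$---is exactly right, and explains why no Diophantine hypothesis is needed here.

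One small slip: in your $H^1$ paragraph you write ``$E_2^{1,1}$ (relevant only when $n=2$)'' but $E_2^{1,1}$ contributes to $H^2$, not $H^1$. The term you need for $H^1$ besides $E_2^{1,0}$ is $E_2^{0,1}=H^0(\Z,H^1(\C^n\setminus\{0\},\cO))$, and you already handled that in the preceding paragraph. So the argument is complete, just mislabelled at that one spot. In any case the paper's definition of ``classical type'' requires dimension $>3$, so $H^1(\C^n\setminus\{0\},\cO)=0$ and only $E_2^{0,0}$ and $E_2^{1,0}$ are in play; your treatment of the $n=2$ case is a harmless bonus.
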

\begin{myrem}
	Theorem \ref{hopf} has an analog for a Hopf manifold of linear type which can deform to a Hopf manifold of classical type (by the same proof of Theorem \ref{hopf}).
\end{myrem}

The above arguments can also be generalized easily to the special case of non-primary Hopf manifolds studied in \cite{zhou-hopf}.
Recall the following cases of non-primary Hopf manifolds studied in \cite[Section 3]{zhou-hopf}.
\begin{mythm}(\cite[Proposition 1]{zhou-hopf})
	Let $X$ be Hopf manifolds of dimension $n > 2$ with
	$\pi_1 (X)$ identified with
	$\langle f, g \rangle \subset Aut(\C^n)$, and $f : (z_1 , \cdots , z_n ) \mapsto (\mu z_1 , \cdots , \mu z_n ),$  $g : (z_1 , \cdots , z_n ) \mapsto (a z_1 , \cdots , az_n ),$ with $0 < |\mu| < 1, a^m = 1$.
	Consider
	$L_{cd} $ a flat line bundle corresponding to the representation $\rho: \pi_1(X) \to \C^*$ with $\rho(f)=c$, $\rho(g)=d$.
	Assume that there exists no $r \in \N$ such that $c= \mu^r$, $d=a^r$.
	Then we have
	$$H^1(X, L)=H^0(X, L)=0.$$
\end{mythm}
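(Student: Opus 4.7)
The plan is to lift sections and cohomology classes to the universal cover $\tilde X = \C^n \setminus \{0\}$, extend via Hartogs to $\C^n$, and exploit the elementary structure of $G := \pi_1(X) = \langle f, g \rangle$. Since $f$ and $g$ are scalar multiplications they commute, so $G$ is abelian; since $|\mu| < 1$ forces $\mu$ to be non-torsion in $\C^*$, we have $\langle f \rangle \cong \Z$, and since $a^m = 1$, $\langle g \rangle$ is cyclic of order dividing $m$. Thus $G \cong \Z \times \Z/m$. The pullback $\pi^* L_{cd}$ is the trivial line bundle on $\tilde X$ with $G$-action twisted by $\rho$, so a section of $L_{cd}$ on $X$ is the same as a function $F \in \cO(\tilde X)$ satisfying $F(\mu z) = c F(z)$ and $F(a z) = d F(z)$.

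For the vanishing of $H^0$, I would invoke Hartogs extension (applicable since $n \geq 2$) to get $F \in \cO(\C^n)$ and expand $F = \sum_I a_I z^I$. The two equivariance constraints yield $a_I(\mu^{|I|} - c) = 0$ and $a_I(a^{|I|} - d) = 0$, so any nonzero $a_I$ forces $r := |I|$ to witness $c = \mu^r$ and $d = a^r$ simultaneously. The hypothesis forbids this, hence $H^0(X, L_{cd}) = 0$.

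For $H^1$, I would apply the Cartan--Leray spectral sequence $E_2^{p,q} = H^p(G, H^q(\tilde X, \pi^* L_{cd})) \Rightarrow H^{p+q}(X, L_{cd})$ combined with the classical vanishing $H^q(\C^n \setminus \{0\}, \cO) = 0$ for $1 \leq q \leq n - 2$, valid because $n > 2$. This reduces the problem to $H^1(G, \cO(\C^n)_\rho) = 0$. Because $\Z/m$ is finite of order invertible in $\C$, its higher group cohomology with $\C$-linear coefficients vanishes, and the Künneth decomposition for $G = \Z \times \Z/m$ yields $H^1(G, M) = H^1(\Z, M^{\langle g \rangle})$. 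The $g$-invariants $M^{\langle g \rangle} \subset \cO(\C^n)$ consist of Taylor series supported on multi-indices $I$ with $a^{|I|} = d$; if no integer satisfies $a^r = d$ then $M^{\langle g \rangle} = 0$ and we are done, while otherwise $d = a^{r_0}$ for a unique $r_0 \in \{0, \ldots, m-1\}$ and $M^{\langle g \rangle}$ is supported on total degrees in the arithmetic progression $r_0 + m\N$.

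On $M^{\langle g \rangle}$ the operator $f - 1$ acts diagonally with eigenvalue $\lambda_r = c^{-1} \mu^r - 1$ on the degree-$r$ piece. The hypothesis prohibits $c = \mu^r$ whenever $a^r = d$, so $\lambda_r \neq 0$ for every $r \in r_0 + m\N$; moreover $|\mu| < 1$ forces $\lambda_r \to -1$ as $r \to \infty$, so the eigenvalues are uniformly bounded away from zero. Therefore $f - 1$ is a topological isomorphism of the Fr\'echet space $M^{\langle g \rangle}$, whence $H^1(\Z, M^{\langle g \rangle}) = M^{\langle g \rangle}/(f - 1) M^{\langle g \rangle} = 0$. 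The main obstacle is carrying out the group-cohomology computation in the right topological framework (continuous group cohomology on the Fr\'echet space $\cO(\C^n)$, or equivalently the $G$-equivariant Dolbeault complex on $\tilde X$), so that diagonal invertibility on Taylor coefficients really assembles into a bounded inverse of $f - 1$; the uniform spectral gap provided by the arithmetic hypothesis is precisely what makes this step legitimate.
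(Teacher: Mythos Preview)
The paper does not prove this statement: it is quoted verbatim from \cite[Proposition 1]{zhou-hopf} and used as a black box, so there is no in-paper argument to compare against. Your proposal therefore has to be judged on its own, and it is essentially correct.

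Your route---Cartan--Leray for the free $G$-cover $\C^n\setminus\{0\}\to X$, Hartogs to replace $H^0(\C^n\setminus\{0\},\cO)$ by $\cO(\C^n)$, the vanishing $H^q(\C^n\setminus\{0\},\cO)=0$ for $1\le q\le n-2$ to kill the higher rows, and then the reduction $H^1(G,M)\cong H^1(\Z,M^{\langle g\rangle})$ via vanishing of finite-group cohomology in characteristic zero---is a clean and standard way to reach the answer. The only step that needs care is the one you flag yourself: surjectivity of $f-1$ on the Fr\'echet space $M^{\langle g\rangle}$. Your observation that the eigenvalues $\lambda_r=c^{-1}\mu^r-1$ are bounded away from~$0$ (they tend to $-1$) is exactly what is required, since it gives a uniform bound $|a_I|\le C|b_I|$ on Taylor coefficients of the formal inverse, hence the inverse of an entire function is entire. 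Two small cosmetic points: $G\cong\Z\times\Z/m'$ where $m'$ is the actual order of $a$ (which merely divides $m$), and the relevant cohomology here is ordinary (abstract) group cohomology of the discrete group $G$, not continuous cohomology---the topology on $M$ enters only when you check that the algebraic inverse of $f-1$ lands back in $\cO(\C^n)$.

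By contrast, the computations in \cite{Mal91} and \cite{zhou-hopf} that the paper cites proceed more directly, via an explicit Leray/\v{C}ech or Dolbeault calculation adapted to the fibration structure of the Hopf manifold. Your spectral-sequence packaging is more conceptual and generalizes readily; their hands-on approach gives slightly more (e.g.\ explicit bases when the cohomology is nonzero).
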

\begin{mythm}(\cite[Proposition 2]{zhou-hopf})
	Let $X$ be generic Hopf manifolds of dimension $n > 2$ with
	$\pi_1 (X)$ identified with
	$\langle f, g \rangle \subset Aut(\C^n)$, and $f : (z_1 , \cdots , z_n ) \mapsto (\mu_1 z_1 , \cdots , \mu_n z_n ),$  $g : (z_1 , \cdots , z_n ) \mapsto (a z_1 , \cdots , az_n ),$ with $0 < |\mu_i| < 1 (\forall i \leq n), a^m = 1$.
	Assume that there are no relations of exponential between $\mu_1, \cdots, \mu_n$.
	Consider
	$L_{cd} $ a flat line bundle corresponding to the representation $\rho: \pi_1(X) \to \C^*$ with $\rho(f)=c$, $\rho(g)=d$.
	Assume that there exists no $v \in \Z^n$ such that $c= \mu^v$, $d=a^{|v|}$
	with $v \geq (1,\cdots,1)$ or $v \geq (0, \cdots,0)$ and some $v_i=0$.
	Then we have
	$$H^1(X, L)=H^0(X, L)=0.$$
\end{mythm}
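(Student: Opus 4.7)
The plan is to pass to the universal cover $Y := \C^n \setminus \{0\}$, on which the flat bundle $\pi^*L$ is holomorphically trivial. Sections of $L$ over an open $U \subset X$ are then identified with holomorphic functions $h$ on $\pi^{-1}(U)$ satisfying $h \circ f = c\,h$ and $h \circ g = d\,h$; more invariantly, $H^*(X, L)$ will be computed via the Cartan--Leray spectral sequence for the Galois cover $\pi \colon Y \to X$ with deck group $G = \pi_1(X) = \langle f, g \rangle \cong \Z \times \Z/m\Z$, in the form $H^p(G, H^q(Y, \cO)) \Rightarrow H^{p+q}(X, L)$, with $G$ acting on $H^q(Y, \cO)$ via the twisted rule $\gamma \cdot h = \rho(\gamma)^{-1}\, (h \circ \gamma)$.

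Since $n > 2$, Hartogs' theorem gives $H^0(Y, \cO) = \cO(\C^n)$ and one has the classical vanishing $H^q(Y, \cO) = 0$ for $1 \leq q \leq n-2$; the spectral sequence then degenerates in this range to $H^q(X, L) = H^q(G, \cO(\C^n))$ for $0 \leq q \leq n-2$. To compute this group cohomology, I would invoke the Hochschild--Serre spectral sequence for $1 \to \Z/m\Z \to G \to \Z \to 1$. Because $\Z/m\Z$ is finite and $\cO(\C^n)$ is a $\C$-vector space, decomposition into $\chi$-isotypic components for the characters $\chi$ of $\Z/m\Z$ (equivalently, Maschke's argument) yields $H^p(\Z/m\Z, \cO(\C^n)) = 0$ for all $p \geq 1$, so the spectral sequence further collapses to $H^q(G, \cO(\C^n)) = H^q(\Z, \cO(\C^n)^{\Z/m\Z})$.

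Expanding $h = \sum_{v \in \N^n} a_v z^v$ in Taylor series, the action of $f$ (resp.\ $g$) on the monomial $z^v$ is multiplication by $c^{-1}\mu^v$ (resp.\ by $d^{-1} a^{|v|}$). Hence $\cO(\C^n)^{\Z/m\Z}$ consists of Taylor series supported on $S := \{v \in \N^n : a^{|v|} = d\}$, and $f$ acts diagonally on this subspace with eigenvalue $c^{-1}\mu^v$. The hypothesis asserts that no $v \in \N^n$ --- any such $v$ either has all coordinates $\geq 1$ or has some $v_i = 0$ --- simultaneously satisfies $\mu^v = c$ and $a^{|v|} = d$; equivalently, $c^{-1}\mu^v \neq 1$ for every $v \in S$. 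This already forces $\cO(\C^n)^G = 0$ and therefore $H^0(X, L) = 0$.

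To conclude $H^1(X, L) = \cO(\C^n)^{\Z/m\Z} / (f-1)\,\cO(\C^n)^{\Z/m\Z} = 0$, what remains is the surjectivity of $f - 1$ on $\cO(\C^n)^{\Z/m\Z}$. Here the contraction property $|\mu_i| < 1$ is essential: $|\mu^v| \to 0$ as $|v| \to \infty$, so $c^{-1}\mu^v - 1 \to -1$, and combined with the hypothesis (which handles the finitely many small $v$), this delivers a uniform lower bound $|c^{-1}\mu^v - 1| \geq \varepsilon > 0$ across $v \in S$. The diagonal inverse $\sum b_v z^v \mapsto \sum b_v (c^{-1}\mu^v - 1)^{-1} z^v$ thus sends convergent Taylor series to convergent Taylor series without shrinking any radius of convergence, providing a genuine inverse to $f - 1$. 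The main --- and, in the end, mild --- obstacle is precisely this analytic check that term-wise inversion preserves holomorphy; the absence of small divisors makes it a routine Cauchy estimate, so that the essential content of the argument is the non-resonance hypothesis itself.
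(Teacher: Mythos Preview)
The paper does not supply its own proof of this statement: it is quoted verbatim as \cite[Proposition 2]{zhou-hopf} and used as a black box. So there is nothing in the present paper to compare against.

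Your argument is correct and is, in outline, the standard one going back to Mall \cite{Mal91} for the primary case and extended by Liu--Zhou in the cited reference. The key inputs---that $H^q(\C^n\setminus\{0\},\cO)=0$ for $1\le q\le n-2$ so that the Cartan--Leray spectral sequence collapses to group cohomology in degrees $\le n-2$, that the finite factor $\Z/m\Z$ contributes no higher cohomology over $\C$, and that the contraction $|\mu_i|<1$ kills any small-divisor problem in inverting $f-1$ term by term---are all identified and handled correctly. Your reading of the somewhat awkward hypothesis (that the two clauses ``$v\ge(1,\dots,1)$'' and ``$v\ge 0$ with some $v_i=0$'' together exhaust $\N^n$) is also the intended one. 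One cosmetic point: what you call ``degeneration'' is really the observation that in total degree $\le n-2$ only the bottom row $E_2^{p,0}$ survives and receives no differentials; this is exactly what you use, so the argument stands as written.
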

Let $X$ be generic Hopf manifolds of dimension $n > 2$ with
$\pi_1 (X)$ identified with
$\langle f, g \rangle \subset Aut(\C^n)$, and $f : (z_1 , \cdots , z_n ) \mapsto (\mu_1 z_1 , \cdots , \mu_n z_n ),$  $g : (z_1 , \cdots , z_n ) \mapsto (a z_1 , \cdots , az_n ),$ with $0 < |\mu_i| < 1 (\forall i \leq n), a^m = 1$.
Take $r_i^j(1 \leq j \leq n, i=1,2,3,4)$ and $r_i^j >\delta >0$ such that for any $1 \leq j \leq n$
$$0 < r_1^j < r_2^j <r_3^j <r_4^j,$$
$$r_4^j=|\mu_j|r_1^j,$$
and the domains $U_i^{j,k}(1 \leq j \leq n, 1 \leq k \leq 2, i=1,2,3)$ in $\C^n$ defined by
$$U_i^{j,k}(\delta)=\{(z_1, \cdots, z_n) \in \C^n; r_i^j -\delta < |z_j|< r_i^j +\delta,
\frac{(k-1) \pi}{m}-\delta<\mathrm{arg}(z_j)< \frac{k \pi}{m}+\delta,
$$$$
|z_k|<r_4^j + \frac{ \delta}{2}, \forall k \neq j \}$$
with $\delta>0$ sufficiently small
satisfy
\begin{enumerate}
	\item for any $ i, j,k $, $U_i^{j,k}(\delta)$ is biholomorphic to its image under the quotient map $\pi: \C^n\setminus \{0\}\to X$,
	\item for any $1 \leq j \leq n$, $1 \leq k \leq 2$
	$$\pi(U_1^{j,k}(\delta)) \cap\pi(U_2^{j,k}(\delta)) \cap\pi(U_3^{j,k}(\delta)) = \emptyset. $$
\end{enumerate}
This defines a Stein nested covering of $X$.
In the above choice of Stein nested covering and notation $L_{cd}$,
let $l_{i_1,i_2}^{j,k_1,k_2}$ be
the transition function from $U_{i_2}^{j,k_1}(\delta)$ to $U_{i_1}^{j,k_2}(\delta)$.
We have for any $j$, $k_1$, $k_2$, $l_{31}^{j,k_1,k_2}=c d^{k_2-k_1}$, $l_{13}^{j,k_1,k_2}=c^{-1}d^{k_2-k_1}$
while all other transition functions are 1.

Define $Z_i=z_i \frac{\d}{\d z_i}$ which descend to tangent vector fields on $X$.
The wedge product
$Z_1 \wedge \cdots \wedge Z_n$ is nowhere vanishing on the Shilov boundary of any open set in the Stein covering.

Finally, for the convenience of the readers, we provide detailed proof of a result of X. Zhou.

\begin{myprop}\cite{Zhou04}.
 Let $L$ be a holomorphic line bundle over a non-primary Hopf manifold $X$.
 Then $L$ is flat.
\end{myprop}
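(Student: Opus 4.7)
The plan is to realize $L$ via a factor of automorphy on the universal cover $\widetilde X=\C^n\setminus\{0\}$ and show that this cocycle is cohomologous to a character $\chi\colon\Gamma\to\C^*$, where $\Gamma=\pi_1(X)$. Since $\{0\}$ has codimension $\geq 2$ in $\C^n$ for $n\geq 2$, every holomorphic line bundle on $\widetilde X$ extends uniquely to $\C^n$ and is hence trivial; so the pullback of $L$ to $\widetilde X$ is trivial and $L$ is encoded by a cocycle $a\colon\Gamma\times\widetilde X\to\C^*$ satisfying $a(\gamma_1\gamma_2,z)=a(\gamma_1,\gamma_2 z)a(\gamma_2,z)$. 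Each $\gamma\in\Gamma$ extends by Hartogs to an automorphism of $\C^n$ which must fix $0$, and each $a(\gamma,\cdot)$ extends to a nowhere-vanishing entire function (a zero set, if nonempty, would be pure codimension one and meet $\widetilde X$). Since $\C^n$ is simply connected I may write $a(\gamma,z)=\exp b(\gamma,z)$ with $b(\gamma,\cdot)\in\cO(\C^n)$. Setting $\chi(\gamma):=a(\gamma,0)$ yields a character by evaluating the cocycle condition at the $\Gamma$-fixed point $0$; after dividing $a$ by $\chi$ (equivalently, tensoring $L$ with the inverse of the flat bundle $L_\chi$) we may assume $b(\gamma,\cdot)\in\cO_0(\C^n):=\{f\in\cO(\C^n)\colon f(0)=0\}$, and the task becomes to show the additive $\cO_0$-valued $1$-cocycle $b$ is a coboundary.

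Next I exploit the structure of the deck group of a non-primary Hopf manifold: $\Gamma$ is virtually cyclic and contains a contraction $\varphi\in\Gamma$ (i.e.\ $d\varphi(0)$ has all eigenvalues $\alpha_i$ of modulus $<1$) such that $N=\langle\varphi\rangle$ is a normal subgroup of finite index. As a covering transformation of a compact Hopf manifold, $\varphi$ contracts uniformly on compacts of $\C^n$: $\varphi^k z\to 0$ geometrically fast with rate controlled by $(\max_i|\alpha_i|+\varepsilon)^k$. This lets me define $(I-\varphi^*)^{-1}f(z):=-\sum_{k\geq 0}f(\varphi^kz)$; the series converges on each compact because $f(0)=0$ gives $|f(\varphi^kz)|\leq C|\varphi^kz|$, and one checks directly that $\varphi^*-I$ is a bijection on $\cO_0(\C^n)$ (injectivity: a $\varphi$-invariant $f\in\cO_0$ satisfies $f(z)=f(\varphi^k z)\to f(0)=0$). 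I then pick $h\in\cO_0$ solving $h\circ\varphi-h=b(\varphi,\cdot)$ and replace $b$ by the cohomologous cocycle $b_1(\gamma,z):=b(\gamma,z)-h(\gamma z)+h(z)$, so that $b_1(\varphi,\cdot)\equiv 0$, hence $b_1(\varphi^j,\cdot)\equiv 0$ for all $j\in\Z$ by the cocycle identity, and $b_1(\gamma,0)=0$ for every $\gamma$. By normality, $\gamma\varphi=\varphi^{k(\gamma)}\gamma$ for some integer $k(\gamma)$; applying the cocycle identity to both sides of this equality and using $b_1(\varphi^j,\cdot)\equiv 0$ gives $b_1(\gamma,\varphi z)=b_1(\gamma,z)$. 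Iterating and letting $\varphi^jz\to 0$ forces $b_1(\gamma,z)=b_1(\gamma,0)=0$, so $b=\delta h$ is a coboundary. Therefore $a$ is cohomologous to $\chi$ and $L\cong L_\chi$ is flat.

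The main technical obstacle is the global inversion of $\varphi^*-I$ on $\cO_0(\C^n)$, as opposed to on germs or formal power series at $0$. Formally, invertibility is immediate: the eigenvalues $\alpha^\beta-1$ of the linear part on degree-$d$ homogeneous polynomials satisfy $|\alpha^\beta-1|\geq 1-(\max_i|\alpha_i|)^d>0$; but promoting this to entire functions requires the truly global contraction $\varphi^kz\to 0$ uniformly on compacts of $\C^n$, which is special to covering transformations of compact Hopf manifolds and not a feature of arbitrary local contractions. A secondary obstacle is the passage from the primary (cyclic) case, where the argument stops at the first inversion step, to the non-primary case: it is precisely the normality of $N=\langle\varphi\rangle$ in $\Gamma$ that permits the propagation identity $b_1(\gamma,\varphi z)=b_1(\gamma,z)$, converting the dynamical invariance under $\varphi$ into vanishing of every $b_1(\gamma,\cdot)$, and thereby transporting the flatness of the restricted cocycle to the full group.
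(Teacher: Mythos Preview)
Your argument is correct and essentially self-contained, but it follows a genuinely different route from the paper's proof. The paper proceeds by descent: it invokes the known fact (due to Mall) that every holomorphic line bundle on a \emph{primary} Hopf manifold is flat, chooses a primary Hopf manifold $Y$ that is a finite \emph{Galois} cover $\pi\colon Y\to X$, pulls $L$ back to a flat bundle $\pi^*L$ on $Y$, and then averages a flat connection on $\pi^*L$ over the finite Galois group $G$ to obtain a $G$-invariant flat connection, which descends to a flat connection on $L\cong\pi^*L/G$. By contrast, you work directly on the universal cover, realize $L$ as a factor of automorphy, and reduce to an additive $\cO_0(\C^n)$-valued cocycle problem which you solve by explicitly inverting $\varphi^*-I$ via the contracting dynamics of $\varphi$ and then exploiting the normality of $\langle\varphi\rangle$ in $\Gamma$ to kill the remaining cocycle.

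Your approach buys more: it simultaneously reproves the primary case (where the argument terminates after the first inversion step) rather than citing it, and it exhibits the flat structure explicitly as the character $\chi(\gamma)=a(\gamma,0)$. The paper's approach is shorter once one grants Mall's theorem and is conceptually clean (the averaging trick is standard for finite covers and abelian structure groups), but it treats the primary case as a black box. Two small points worth making explicit in your write-up: the passage from the multiplicative cocycle $a$ to an honest additive cocycle $b$ (rather than one modulo $2\pi i\Z$) relies on the normalization $b(\gamma,0)=0$, which forces the integer ambiguity to vanish upon evaluating the cocycle identity at $0$; and the global contraction $\varphi^k z\to 0$ on all of $\C^n$ (not just on a germ) follows from compactness of the quotient $(\C^n\setminus\{0\})/\langle\varphi\rangle$, since the basin of attraction of $0$ then yields an open compact hence full subset of the connected Hopf manifold.
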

\begin{proof}
First we choose a Galois cover $Y$ of $X$ (i.e. there exists a finite group $G \subset Aut(Y )$ such that the cover $\pi: Y \to X$ is
isomorphic to the quotient map $Y \to Y /
G$.)
It is well known that
given an arbitrary Hopf manifold $X$, there is a primary Hopf manifold $Y_1$
and a holomorphic finite cover $Y_1 \to X$.
The fundamental group $\pi_1(Y_1)$ is of finite index in $\pi_1(X)$.
This subgroup $\pi_1(Y_1)$ thus has a finite number
of conjugates in $\pi_1(X)$ and the intersection of these conjugates is a normal finite
index subgroup $G$ in $\pi_1(X)$.
The group $G$ corresponds to a holomorphic Galois cover $Y \to X$ with factorisation $Y \to Y_1 \to X$.
Since $Y_1$ is primary, as a finite cover, $Y$ is also primary.

Since $\pi^* L$ is a holomorphic line bundle over the primary Hopf manifold $Y$,
it is flat with a flat connection.
Since the cover $\pi$ is Galois, we have $L \simeq \pi^* L/G$.
Since $\pi$ is finite, one can construct a $G$-invariant flat
connection on $\pi^* L$ by averaging over the group action.
This induces a flat connection on $L$.
\end{proof}

\end{document}